\newenvironment{proof}{\pf}{$\square$ \endpf}
\newcommand{\trace}{\mathtt{tr}}
\newcommand{\Expectation}{\mathbb{E}}
\renewcommand{\Re}{\mathbb{R}}
\begin{document}

\begin{frontmatter}

\title{Recursively Feasible Stochastic Model Predictive Control for Time-Varying Linear Systems Subject to Unbounded Disturbances\thanksref{footnoteinfo}}

\author[Jacob]{Jacob W. Knaup\thanksref{correspondingAuthor}}\ead{jacobk@gatech.edu},
\author[Panos]{Panagiotis Tsiotras}\ead{tsiotras@gatech.edu}
% \thanks{This paragraph of the first footnote will contain the date on 
% which you submitted your paper for review. It will also contain support 
% information, including sponsor and financial support acknowledgment. For 
% example, ``This work was supported in part by the U.S. Department of 
% Commerce under Grant BS123456.'' }
\address[Jacob]{Georgia Institute of Technology, School of Interactive Computing and Institute for Robotics and Intelligent Machines, Atlanta, GA, USA}
\address[Panos]{Georgia Institute of Technology, School of Aerospace Engineering and Institute for Robotics and Intelligent Machines, Atlanta, GA, USA}
% \address[Panos]{P. Tsiotras is with the School of Aerospace Engineering and the
%  Institute for Robotics and Intelligent Machines,
% Georgia Institute of Technology,  Atlanta, GA 30332--0150, USA (e-mail: tsiotras@gatech.edu)}

\thanks[footnoteinfo]{This paper was not presented at any conference.}
\thanks[correspondingAuthor]{Corresponding author.}

\begin{keyword}
    Stochastic optimal control; Model predictive control; Convex optimization; Linear time-varying systems, Uncertain systems.
\end{keyword}

% \maketitle
% % \thispagestyle{empty}
% % \pagestyle{empty}

\begin{abstract}
    Model predictive control solves a constrained optimization problem online in order to compute an implicit closed-loop control policy. 
    Recursive feasibility---guaranteeing that the optimal control problem will have a solution at every time step---is an important property to guarantee the success of any model predictive control approach. 
    However, recursive feasibility is difficult to establish in a stochastic setting and, in particular, in the presence of disturbances having unbounded support (e.g., Gaussian noise).
    The problem is further exacerbated for time-varying systems, in which case recursive feasibility must be established also in a robust sense, over all possible future time-varying parameter values, as well as in a stochastic sense, over all potential disturbance realizations. 
    This work presents a method for ensuring the recursive feasibility of a convex, affine-feedback stochastic model predictive control problem formulation for systems with time-varying system matrices and unbounded disturbances using ideas from covariance steering stochastic model predictive control.
    It is additionally shown that the proposed approach ensures the closed-loop operation of the system will satisfy the desired chance constraints in practice, and that the stochastic model predictive control problem may be formulated as a convex program so that it may be efficiently solved in real-time.
\end{abstract}

\end{frontmatter}

\section{Introduction}

Model predictive control (MPC) is an attractive optimization-based method for controlling autonomous systems owing to its ability to handle input and state constraints while ensuring convergence under real-time computational limitations~\cite{borrelli2017predictive}. 
% The primary approaches to MPC can be broadly categorized by sampling-based approaches and analytical approaches. While sampling based approaches allow for a more general system and cost function (e.g. gradient information of the dynamics or cost function is not required), their theoretical properties are difficult to access. In particular, it is difficult to ensure constraint satisfaction and convergence of these approaches in practice. Therefore, while sampling-based approaches may have advantages in terms of the ease of application (and indeed have been demonstrated successfully in many applications), they lack the main theoretical benefits which make MPC an attractive approach. Therefore, this work focuses on providing theoretical results for analytical approaches. 
Stochastic MPC (SMPC), in particular, has gained interest as a method of providing probabilistic guarantees of constraint satisfaction and convergence to a neighborhood of the target in the presence of random disturbances.
However, several challenges of stochastic MPC still exist.
Whereas deterministic MPC optimizes over control actions at each time step, stochastic MPC must, instead, optimize over control \emph{policies} as a means of controlling the dispersion of trajectories over time. 
Since optimizing over arbitrary policies is not easily tractable, typical formulations consider an affine feedback policy \cite{mesbah2016stochastic}. 
In some cases, the feedback gain is calculated offline and is fixed; a more optimal approach is to design the feedback policy online in conjunction with a feed-forward control action. 
This, however, can lead to challenges with preserving the convexity of the problem formulation. 
Another issue of SMPC is ensuring convergence. 
Whereas for deterministic MPC it is possible to show asymptotic convergence for the case of linear systems, for SMPC only convergence to a neighborhood of the origin can be shown since the system is subject to a persistent disturbance \cite{mayne2014model}.
Finally, providing recursive feasibility guarantees for SMPC approaches is challenging when the noise is sampled from a distribution with unbounded support (e.g., Gaussian distribution). 
% This is because, given the distribution is supported over the set of all real numbers, for a general linear system, there always exists a potential disturbance which may push the system outside of the feasible region. 
Existing approaches addressing this issue typically consider either a truncated Gaussian distribution or relax the problem formulation~\cite{mesbah2016stochastic, cannon2010stochastic, kouvaritakis2010explicit, farina2013probabilistic, hewing2018stochastic}.

In our recent work \cite{knaup2023safe}, we formulated a stochastic MPC approach for linear \emph{time-varying} systems based on the recent theoretical developments in the area of finite-horizon optimal covariance steering~\cite{goldshtein2017finite, chen2015optimal1, chen2015optimal2, chen2018optimal, bakolas2016optimal, bakolas2018finite, okamoto2018optimal, okamoto2019optimal}.
The covariance steering problem addresses the issue of how to optimally drive the first two moments of a linear system from some initial condition to a given final condition over a finite horizon. 
This problem is closely related to the questions of recursive feasibility and stability of SMPC, since bounds on the first two moments of the state distribution may be used to ensure constraint satisfaction and convergence. 
Using results from covariance steering, in \cite{knaup2023safe} we formulated an online affine-feedback solution to the SMPC problem that can be efficiently solved using convex programming, which we refer to as covariance steering stochastic MPC (CS-SMPC). 
Assuming recursive feasibility, the method provides asymptotic tracking convergence for the nominal system and convergence to a neighborhood of the reference trajectory for the stochastic system. 
However, the question of ensuring the recursive feasibility, and consequently closed-loop constraint satisfaction, of CS-SMPC was not addressed in~\cite{knaup2023safe} and is the focus of this paper. 

\subsection{Literature Review}

Most papers addressing the issue of recursive feasibility for stochastic systems assume that the disturbance is drawn from a distribution with bounded support (e.g., truncated Gaussian distribution). 
In \cite{cannon2010stochastic, kouvaritakis2010explicit, cannon2012stochastic, kouvaritakis2015developments}, the authors consider a linear time-invariant (LTI) system subject to bounded additive disturbances and design an SMPC controller based on a fixed stabilizing feedback gain which is computed offline. 
In \cite{oldewurtel2008tractable, korda2011strongly}, the authors consider an SMPC problem for an LTI system subject to an additive Gaussian disturbance. 
The authors in these papers optimize over the space of affine feedback policies; however, they also approximate the Gaussian with a truncated distribution, essentially formulating a robust MPC control problem. 

One of the few works that considered disturbances drawn from a distribution with unbounded support is \cite{farina2013probabilistic}, in which the authors optimize over the space of initial conditions and affine feedback policies in order to achieve recursive feasibility. 
The drawback of this strategy is that if the current state would lead to infeasibility, an open loop control must be used, 
discarding the most recent state information. 
The authors of \cite{farina2013probabilistic} extended their method to consider output feedback in \cite{farina2015approach}. 
However, the method of ensuring recursive feasibility remains similar, and suffers from the same drawbacks, as in~\cite{farina2013probabilistic}. 

A more recent work \cite{kohler2022recursively} notes that the relaxation of the initialization proposed in \cite{farina2013probabilistic} is still the state-of-the-art and has been utilized in numerous theoretical extensions and applications. 
The authors of \cite{kohler2022recursively} propose to improve the initialization strategy by allowing for a continuous interpolation between using the most recent state measurement and relying on open-loop prediction of the state. 
They prove recursive feasibility and convergence for their method; however, their method still relies on relaxing the initialization constraint and may discard the most recent state information. 
Additionally, the authors in \cite{kohler2022recursively} use a fixed feedback gain computed offline rather than optimizing over affine-feedback policies.
The authors of \cite{hewing2018stochastic, hewing2020recursively} also extended the work of \cite{farina2013probabilistic}. 
However, instead of optimizing over the state initialization, they propose to include the state measurement only in the cost function while the constraints are given purely in terms of the nominal system through a tube-based constraint-tightening approach. 
Unfortunately, this approach can be overly conservative due to the reliance on a pre-computed feedback policy and does not take advantage of the most recent information in applying the state and input constraints. 

The authors of \cite{paulson2020stochastic} also consider unbounded disturbances; however, they take a different approach. 
They address the issue of recursive feasibility and establish convergence for an unconstrained variation of the problem by softening the chance constraints using a penalty function (i.e., by imposing them through the cost function). 
These existing results all require relaxing the problem's constraints (either the current state initialization or the safety constraints) in order to prove recursive feasibility.
% Perhaps most similar to the current work, the authors of \cite{cannon2009model} consider a linear system subject to multiplicative disturbances drawn from a distribution with unbounded support. The authors of \cite{cannon2009model} introduce the concept of ``invariance with probability $p$'' in order to prove recursive feasibility with high probability. However, their method relies on computing a stabilizing feedback gain offline which leads to conservatism and limits applicability beyond LTI systems.

Very few works deal with SMPC for linear time-varying (LTV) systems, and those that do, only consider bounded disturbances. 
Considerable challenges hamper the direct application of existing recursive feasibility results to LTV systems. 
For example, the results obtained in \cite{farina2013probabilistic} require a pre-computed stabilizing feedback gain to design the terminal constraints. 
However, determining a single feedback gain that is stabilizing over all possible time-varying future system parameters is challenging. 
Likewise, the authors of \cite{paulson2020stochastic} assume that the system's state-transition matrix is Schur or Lyapunov stable which is often not the case in practice and can be difficult to verify ahead of time for an LTV system. 
Therefore, there is a need for results addressing LTV systems.
One of the few works addressing LTV systems is \cite{gonzalez2011online} which considers a robust MPC problem formulated with an online affine-feedback policy. 
However, as mentioned above, the authors limit their analysis to bounded disturbances. 
The authors of \cite{hanema2017stabilizing, lima2017stability} consider a linear parameter-varying system that approximates a nonlinear deterministic system. 
They show that recursive feasibility can be ensured using robust tube-based and invariant set-based approaches, respectively, provided the parameters lie in a compact set. 

As summarized in Table~\ref{tab:recursive_feasibility_literature}, the existing literature on the recursive feasibility of SMPC for systems subject to unbounded disturbances is extremely sparse, especially for the less-conservative case in which an affine-feedback policy is computed online. Furthermore, the overlap between results that consider both unbounded disturbances \emph{and} time-varying systems is non-existent. 
In this work, we fill this gap in the literature by establishing sufficient conditions for the recursive feasibility of SMPC for LTV systems subject to unbounded disturbances. 
Moreover, we provide a practical approach for computing the terminal constraint necessary in order to ensure that these conditions hold in practice.
We show that our method results in satisfaction of the chance constraints during closed-loop operation of the system as desired.
Finally, we show that these results hold for the particular formulation of the SMPC problem used by CS-SMPC in \cite{knaup2023safe}, which optimizes online over the space of affine feedback policies by solving a convex program.

\begin{table}[h]
\centering
\caption{Comparison of existing results on recursive feasibility of stochastic MPC.}
\label{tab:recursive_feasibility_literature}
\begin{tabular}{|l||c|c|}
\hline
Disturbances & \begin{tabular}[c]{@{}c@{}}Bounded (Robust)\\ \cite{oldewurtel2008tractable, cannon2010stochastic, kouvaritakis2010explicit, korda2011strongly, cannon2012stochastic, kouvaritakis2015developments} \\
\cite{gonzalez2011online, hanema2017stabilizing, lima2017stability}
\end{tabular} 
& \begin{tabular}[c]{@{}c@{}}Unbounded \\ (Stochastic)\\ \cite{farina2013probabilistic, farina2015approach, paulson2020stochastic, hewing2018stochastic, hewing2020recursively, kohler2022recursively} \end{tabular}   \\
\hline

Feedback     & \begin{tabular}[c]{@{}c@{}}Offline (Tube)\\ \cite{cannon2010stochastic,  kouvaritakis2010explicit, hewing2018stochastic, cannon2012stochastic} \\ 
\cite{kouvaritakis2015developments, hewing2020recursively, kohler2022recursively} \end{tabular}   
& \begin{tabular}[c]{@{}c@{}}Online \\ (Affine-feedback)\\ \cite{oldewurtel2008tractable, gonzalez2011online, korda2011strongly, farina2013probabilistic, farina2015approach, paulson2020stochastic}\end{tabular} \\
\hline

System       & \begin{tabular}[c]{@{}c@{}}LTI\\ \cite{cannon2010stochastic, cannon2012stochastic, farina2013probabilistic, farina2015approach, hewing2020recursively, hewing2018stochastic} \\ \cite{korda2011strongly, kouvaritakis2010explicit, kouvaritakis2015developments, paulson2020stochastic, oldewurtel2008tractable, kohler2022recursively} \end{tabular}             
& \begin{tabular}[c]{@{}c@{}}LTV\\ \cite{gonzalez2011online, hanema2017stabilizing, lima2017stability}\end{tabular}                      \\
\hline
% 
% &  & \\
% 
% \hline
\end{tabular}
\end{table}

\section{Problem Formulation}

We consider a linear time-varying (LTV) system given by
\begin{align} \label{eq:system_dynamics}
    x_{k+1} &= A_k x_k + B_k u_k + D_k w_k + r_k,
\end{align}
where $x_k \in \Re^{n_x}$ and $u_k \in \Re^{n_u}$ are the fully observable state and control action at time-step $k$, respectively, and $w_k \sim \mathcal{N}(\bm{0}_{n_w}, I_{n_w})$ is an i.i.d. disturbance (that is, $\Expectation[w_{k_1} w_{k_2}^\top] = \Expectation[w_{k_1}]\Expectation[w_{k_2}^\top] = 0$ for all $k_1 \neq k_2$). 
The system parameter matrices/vector are given by $A_k \in \mathcal{A} \subseteq \Re^{n_x \times n_x}$, $B_k \in \mathcal{B} \subseteq \Re^{n_x \times n_u}$, $D_k \in \mathcal{D} \subseteq \Re^{n_x \times n_w}$, and $r_k \in \mathcal{R} \subseteq \Re^{n_x}$, where $\mathcal{A}$, $\mathcal{B}$, $\mathcal{D}$, and $\mathcal{R}$ are such that 
%the system matrix $S_k \in \mathcal{S} \subset \Re^{n_x \times (n_x + n_u + n_w + 1)}$ is the convex hull of $N_{p}$ extreme points given by
% \begin{subequations}
\begin{align} \label{eq:convex_hull}
    S_k &= \begin{bmatrix}
        A_k & B_k &
        D_k & r_k
    \end{bmatrix} \in \mathcal{S} = \mathrm{co} \{S^{1}, \dots, S^{N_{p}} \} \nonumber\\
    &= \{ \sum_{\ell=1}^{N_{p}} \lambda_{\ell} S^{\ell} : \sum_{\ell=1}^{N_{p}} \lambda_{\ell} = 1,~ \lambda_{\ell} \geq 0 \},
\end{align}
where 
% \begin{align}
$S^{\ell} = \begin{bmatrix}
    A^{\ell} & B^{\ell} &
    D^{\ell} & r^{\ell}
\end{bmatrix}$, for all $\ell = 1, \dots, N_{p}$.
% \end{align}
% \end{subequations}
% \begin{equation}
%     \mathcal{A} = \mathrm{co} \{ A^1,\ldots, A^{N_p} \} = \{ \sum_{\ell=1}^{N_p} \lambda_\ell A^\ell : \sum_{\ell=1}^{N_p} \lambda_{\ell} = 1, ~\lambda_\ell \ge 0 \}, 
% \end{equation}
% and similarly for $\mathcal{B},\mathcal{D}$, and $\mathcal{R}$.
% by $A^{\ell}$, $B^{\ell}$, $D^{\ell}$, $r^{\ell}$ for $\ell = 1, \dots, N_{p}$, and where the parameter values $A_k$, $B_k$, $D_k$, $r_k$ at each each time-step are assumed to be known. 
% \begin{remark}
%     The condition of convexity of the system parameter sets can always be achieved by taking $\mathcal{A}$, $\mathcal{B}$, $\mathcal{D}$, and $\mathcal{R}$ to be the convex hull of the potential parameter values.
%     Therefore, the results presented in this work also hold immediately for the case where the sets $\mathcal{A}$, $\mathcal{B}$, $\mathcal{D}$, and $\mathcal{R}$ are finite with $N_{p}$ elements given by $A^{\ell}$, $B^{\ell}$, $D^{\ell}$, $r^{\ell}$, for $\ell = 1, \dots, N_{p}$.
%     % or where all system parameters lie in sets whose convex hulls are given by $\mathcal{A}$, $\mathcal{B}$, $\mathcal{D}$, and $\mathcal{R}$.
% \end{remark}

The system \eqref{eq:system_dynamics} is subject to $N_x$ and $N_u$ polytopic state and input chance constraints, respectively, given by
\begin{subequations} \label{eq:chance_constraints}
    \begin{align}
        \Pr(\alpha_{x, i}^\top x_k &\leq \beta_{x, i}) \geq 1 - p_{x, i}, \\
        \Pr(\alpha_{u, j}^\top u_k &\leq \beta_{u, j}) \geq 1 - p_{u, j}, 
    \end{align}
\end{subequations}
for $i = 1, 2, \dots, N_x$ and $j = 1, 2, \dots, N_u$, where the linear inequalities define polytopes given by
\begin{subequations} \label{eq:constraint_sets}
    \begin{align}
	\mathcal{X} &\triangleq \bigcap_{i = 1}^{N_x} \left\{x: \alpha_{x,i}^\top x \leq \beta_{x,i}\right\},
	\label{eq:X_definition} 
	\\
	\mathcal{U} &\triangleq 
	\bigcap_{j = 1}^{N_u} \left\{u: \alpha_{u,j}^\top u \leq \beta_{u,j}\right\}.
	\label{eq:U_definition}
    \end{align}
\end{subequations}
The states and control actions of system \eqref{eq:system_dynamics} are associated with a cost function given by 
\begin{align}
    \ell_k(x_k, u_k) = (x_k - x_{g_k})^\top Q_k (x_k - x_{g_k}) + u_k^\top R_k u_k,
\end{align}
where, for all $k = 0, 1, \dots$, $Q_k \succeq 0$ and $R_k \succ 0$ are known bounded matrices, and $x_{g_k} \in \mathcal{X}$ is a known target sequence to track.

To design an optimal control policy 
% $u_k = \pi_k(x_k)$ 
for system \eqref{eq:system_dynamics}, define the following Covariance Steering SMPC (CS-SMPC) problem given by
\begin{subequations} \label{prob:smpc}
    \begin{align}
        &\min_{\bm{\rho}_{k}^{N-1}(\cdot)} J_N(\mu_k, \Sigma_k; \bm{\rho}_{k}^{N-1}(\cdot)) = \sum_{t=k}^{k+N-1} \Expectation[ \ell_t(x_{t|k}, u_{t|k})], \\
        &\text{subject to} \nonumber\\
        &x_{t+1|k} = A_t x_{t|k} + B_t u_{t|k} + D_t w_t + r_t, \\
        &x_{k|k} \sim \mathcal{N}(\mu_k, \Sigma_k), \label{const:initial_state}\\
        & u_{t|k} = \rho_{t|k}(x_{t|k}), \\
        &\Pr(\alpha_{x, i}^\top x_{t|k} \leq \beta_{x, i}) \geq 1 - p_{x, i}, ~i=1, \dots, N_x, \label{const:state_chance}\\
        &\Pr(\alpha_{u, j}^\top u_{t|k} \leq \beta_{u, j}) \geq 1 - p_{u, j}, ~j=1, \dots, N_u, \label{const:control_chance}\\
        &\Expectation[x_{k+N|k}] \in \mathcal{X}_f^\mu, \label{const:terminal_mean} \\
        &\Expectation[(x_{k+N|k} - \Expectation[x_{k+N|k}])(x_{k+N|k} - \Expectation[x_{k+N|k}])^\top] \preceq \Sigma_f, \label{const:terminal_cov}
    \end{align}
\end{subequations}
where, for all $t = k, \dots, k+N-1$, $x_{t|k}$ and $u_{t|k}$ are the predicted state and control action, respectively, at time $t$ computed at the present time $k$ and where $\mathcal{X}_f^\mu \subseteq \mathcal{X}$ and $\Sigma_f \succ 0$ must be designed to ensure the recursive feasibility of \eqref{prob:smpc}. 
The optimization is performed over the sequence of planned state-feedback control \emph{policies} 
$\bm{\rho}_{k}^{N-1}(\cdot) = \{\rho_{k|k}(\cdot), \dots, \rho_{k+N-1|k}(\cdot) \}$. 
%$\bm{\rho}_{k}^{N-1}(\bm{x}_{k}^{N-1}) = \{\rho_{k|k}%(x_{k|k}), \dots, \rho_{k+N-1|k}(x_{k+N-1|k}) \}$, 
% 
Since the future state sequence $\bm{x}_{k}^{N-1}
= \{x_{k|k}, \dots, x_{k+N-1|k} \}$
consists of random variables, the sequence of predicted future control actions, given by
$\bm{u}_{k}^{N-1} = \{u_{k|k}, \dots, u_{k+N-1|k} \}$, where $u_{t|k} = \rho_{t|k}(x_{t|k})$, is also stochastic.

Problem~\eqref{prob:smpc} is a finite-horizon optimal control problem representing a tractable approximation of the corresponding infinite-horizon optimal control problem, and is solved in a receding horizon fashion to generate an optimal closed-loop control policy for the system.
Let the optimal cost of Problem~\eqref{prob:smpc} at time $k$ be denoted by $J_N^{\ast}(\mu_k, \Sigma_k)$ and let the corresponding optimal solution be denoted by
$\bm{\rho}_{k}^{\ast N-1}(\cdot) = \{\rho_{k|k}^{\ast}(\cdot), \dots, \rho_{k+N-1|k}^{\ast}(\cdot) \}$, 
generating the predicted sequences of stochastic control actions 
$\bm{u}_{k}^{\ast N-1} = \{u_{k|k}^\ast, \dots, u_{k+N-1|k}^\ast \}$
and states
$\bm{x}_{k}^{\ast N} = \{x_{k|k}^{\ast}, \dots, x_{k+N|k}^{\ast} \}$, 
where 
$u_{t|k}^{\ast} = \rho_{t|k}^{\ast}(x_{t|k}^{\ast})$, 
$x_{t+1|k}^{\ast} = A_{t} x_{t|k}^{\ast} + B_{t} u_{t|k}^{\ast} + D_{t} w_{t} + r_{t}$, 
and 
$x_{k|k}^{\ast} \sim \mathcal{N}(\mu_k, \Sigma_k)$. 
The sequence $\bm{x}_{k}^{\ast N}$ has the corresponding optimal predicted moment sequences denoted by 
$\bm{\mu}_{k}^{\ast N} = \{\mu_{k|k}^{\ast}, \dots, \mu_{k+N|k}^{\ast} \}$ 
and 
$\bm{\Sigma}_{k}^{\ast N} = \{\Sigma_{k|k}^{\ast}, \dots, \Sigma_{k+N|k}^{\ast} \}$, 
where $x^{\ast}_{t|k} \sim \mathcal{N}(\mu_{t|k}^{\ast}, \Sigma_{t|k}^{\ast})$ and where $\mu_{k|k}^{\ast} = \mu_k$ and $\Sigma_{k|k}^{\ast} = \Sigma_k$.
The control command applied to system \eqref{eq:system_dynamics} at time $k$ is then given by 
\begin{align} \label{eq:mpc_control_policy}
    u_k = u_{k|k}^\ast.
\end{align}

In~\cite{knaup2023safe}, we showed that Problem~\eqref{prob:smpc} can be cast as a convex program for a suitable parameterization of the control policy (see Theorem 2 of \cite{knaup2023safe}). 
Additionally, we showed that if system \eqref{eq:system_dynamics} is controlled by \eqref{eq:mpc_control_policy}, then the nominal system ($w_k \equiv 0$) converges asymptotically to the reference trajectory $x_{g_k}$ (see Theorem 3 of \cite{knaup2023safe}) and the stochastic system \eqref{eq:system_dynamics} converges to a neighborhood of the reference trajectory on average (that is, the average stage cost is bounded as $k \rightarrow \infty$, see Theorem 4 of \cite{knaup2023safe}). 
However, both of these results rely on the condition that a solution to \eqref{prob:smpc} exists at every time $k = 0, 1, \dots$, using the initialization \eqref{const:initial_state}
where $\mu_k$ and $\Sigma_k$ are given by
\begin{equation} \label{eq:initialization}
    (\mu_k, \Sigma_k) =
        \begin{cases}
           (x_0, 0), & k = 0, \\[5pt]
            (\mu^\ast_{k|k-1}, \Sigma^\ast_{k|k-1}), & k > 0,
        \end{cases}
\end{equation}
where 
\begin{subequations} \label{eq:moment_dynamics}
    \begin{align}
        \mu_{k+1|k}^\ast &= A_{k} \mu_{k} + B_{k} \bar{u}_k + r_{k}, \\
        \Sigma_{k+1|k}^\ast &= A_k \Sigma_{k} A_k^\top + A_k \Sigma_{xu_k} B_k^\top + B_k \Sigma_{xu_k}^\top A_k^\top \nonumber\\
        &+ B_k \Sigma_{u_k} B_k^\top + D_k D_k^\top,
    \end{align}
\end{subequations}
and where $\bar{u}_k = \Expectation[u_{k}|\mu_k, \Sigma_k]$, $\Sigma_{xu_k} = \Expectation[(x_k - \mu_k]) (u_k - \bar{u}_k)^\top | \mu_k, \Sigma_k]$ and $\Sigma_{u_k} = \Expectation[(u_k - \bar{u}_k) (u_k - \bar{u}_k)^\top | \mu_k, \Sigma_k]$. 
Indeed, all existing stability results for SMPC rely on the assumption that the
corresponding optimal control problem  is 
feasible at each time step. 

The aim of this work is to investigate conditions implying the feasibility of Problem~\eqref{prob:smpc} under the initialization \eqref{eq:initialization}.
% 
% In order to ensure that Problem~\eqref{prob:socp} remains recursively feasible, we introduce terminal constraints, resulting in the covariance steering SMPC (CS-SMPC) problem given by
% \begin{subequations} \label{prob:smpc}
%     \begin{align}
%         &\min_{\bm{\rho}_{k|k}^{N-1}(\cdot)} J_N(\mu_k, \Sigma_k; \bm{\rho}_{k|k}^{N-1}(\cdot)) = \sum_{t=k}^{k+N-1} \Expectation[ \ell_t(x_{t|k}, u_{t|k})], \\
%         &\text{subject to} \nonumber\\
%         &x_{t+1|k} = A_t x_{t|k} + B_t u_{t|k} + D_t w_t + r_t, \label{const:predicted_dynamics} \\
%         &x_{k|k} \sim \mathcal{N}(\mu_k, \Sigma_k), \\
%         &\Pr(\alpha_{x, i}^\top x_{t|k} \leq \beta_{x, i}) \geq 1 - p_{x, i}, ~i=1, \dots, N_x, \label{const:state_chance}\\
%         &\Pr(\alpha_{u, j}^\top u_{t|k} \leq \beta_{u, j}) \geq 1 - p_{u, j}, ~j=1, \dots, N_u, \label{const:control_chance}\\
%         &\Expectation[x_{k+N|k}] \in \mathcal{X}_f^\mu, \label{const:terminal_mean}\\
%         &\Expectation[(x_{k+N|k} - \Expectation[x_{k+N|k}])(x_{k+N|k} - \Expectation[x_{k+N|k}])^\top] \preceq \Sigma_f, \label{const:terminal_cov}
%     \end{align}
% \end{subequations}
% where $\mathcal{X}_f^\mu \subseteq \mathcal{X}$ and $\Sigma_f \succ 0$ must be designed to ensure the recursive feasibility of \eqref{prob:smpc}. 
%
% Notice that, trivially, if Problem~\eqref{prob:smpc} has a solution, then Problem~\eqref{prob:socp} also has a solution. 
% %
In Section~\ref{sec:sufficient_conditions}, we derive sufficient conditions for $\mathcal{X}_f^\mu$ and $\Sigma_f$ such that Problem~\eqref{prob:smpc} is recursively feasible, and in Section~\ref{sec:terminal_constraints} we show how to compute a $\mathcal{X}_f^\mu$ and $\Sigma_f$ to meet these conditions.
We then show that our method of handling the terminal constraints allows for a convex problem formulation with an affine feedback parameterization and ensures satisfaction of the chance constraints in practice.

% \pt{I am not sure about this remark - may be we can discuss to explain}

\begin{rem}
    In \cite{knaup2023safe} we allowed Problem~\eqref{prob:smpc} to be initialized with $\mu_k = x_k$, $\Sigma_k = 0$ instead of \eqref{eq:initialization} if the following two conditions are satisfied: Problem~\eqref{prob:smpc} is feasible under such an initialization, and also its solution provides a lower optimal cost than that under \eqref{eq:initialization} (i.e., $J_{N}^{\ast}(x_k, 0) \leq J_{N}^{\ast}(\mu_{k|k-1}^{\ast}, \Sigma_{k|k-1}^{\ast})$). 
    Therefore, feasibility under \eqref{eq:initialization} implies feasibility under the initialization proposed in \cite{knaup2023safe} and thus it provides a sufficient condition for the stability results in Theorems~3~and~4 of~\cite{knaup2023safe}.
\end{rem}

\section{Recursive Feasibility Conditions} \label{sec:sufficient_conditions}

We are now ready to introduce the main result of the paper, which establishes sufficient conditions for the recursive feasibility of Problem~\eqref{prob:smpc}.

\begin{thm} \label{thm:sufficient_conditions_recursive_feasibility}
    Let $\mathcal{X}_{f}^{\mu}$ and $\Sigma_f$ be such that,
    given $\mu \in \mathcal{X}_{f}^{\mu}$ and $\Sigma \preceq \Sigma_f$, 
    there exists a $\rho(\cdot)$
  %  \pt{Note: $\rho$ depends on $\mu$ and $\Sigma$}
    such that, for all $A \in \mathcal{A}$, $B \in \mathcal{B}$, $D \in \mathcal{D}$, $r \in \mathcal{R}$,
        \begin{subequations} \label{eq:terminal_invariance}
    \begin{align}
        &\Pr(\alpha_{x, i}^\top x \leq \beta_{x, i}) \geq 1 - p_{x, i}, ~i=1, \dots, N_x, \label{eq:term_imp_state_chance} \\
        &\Pr(\alpha_{u, j}^\top u \leq \beta_{u, j}) \geq 1 - p_{u, j}, ~j=1, \dots, N_u, \label{eq:term_imp_control_invariance} \\
        &A \mu + B \bar{u} + r \in \mathcal{X}_{f}^{\mu}, \label{eq:term_imp_mean_inv} \\
        &A \Sigma A^\top + A \Sigma_{xu} B^\top + B \Sigma_{xu}^\top A^\top \nonumber\\
        &\qquad\quad + B \Sigma_{u} B^\top + D D^\top \preceq \Sigma_{f} \label{eq:term_imp_sigma_inv},
    \end{align}
    where 
    %$\rho(\cdot)$ is an arbitrary state feedback policy, 
    $x \sim \mathcal{N}(\mu, \Sigma)$,
    $u = \rho(x)$,
    $\bar{u} = \Expectation[u | \mu, \Sigma]$, $\Sigma_{xu} = \Expectation[(x - \mu) (u - \bar{u})^\top | \mu, \Sigma]$ and $\Sigma_{u} = \Expectation[(u - \bar{u}) (u - \bar{u})^\top | \mu, \Sigma]$.
    \end{subequations}
    Then, if Problem~\eqref{prob:smpc} is feasible at time $k=0$ using the initialization~\eqref{eq:initialization}, it will remain feasible under initialization~\eqref{eq:initialization} for all future time steps $k > 0$.
\end{thm}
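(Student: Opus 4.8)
The plan is to argue by induction on $k$, following the classical recursive‑feasibility template adapted to the moment‑propagation setting. The base case $k=0$ is the hypothesis of the theorem. For the inductive step I would assume Problem~\eqref{prob:smpc} is feasible at time $k$ under \eqref{eq:initialization}, fix an \emph{optimal} solution $\bm{\rho}_{k}^{\ast N-1}(\cdot)$ with associated predicted moment sequences $\bm{\mu}_{k}^{\ast N}$ and $\bm{\Sigma}_{k}^{\ast N}$, and build from it an explicit feasible candidate for Problem~\eqref{prob:smpc} at time $k+1$. The candidate is the time‑shifted policy that reuses $\rho_{t|k}^{\ast}(\cdot)$ over the overlap $t = k+1,\dots,k+N-1$ and appends, at the new terminal control stage $t = k+N$, the policy $\rho(\cdot)$ whose existence is guaranteed by hypothesis \eqref{eq:terminal_invariance} for the state moments that the shifted trajectory attains at that stage. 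A convenient feature of this setting is that, because the problem is re‑initialized via \eqref{eq:initialization}--\eqref{eq:moment_dynamics} rather than by remeasuring the state, the entire argument stays inside the prediction model and never needs to relate the realized state to the predicted one.

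First I would check admissibility of the candidate with respect to \eqref{const:initial_state} and \eqref{eq:initialization}. By \eqref{eq:initialization}--\eqref{eq:moment_dynamics} the problem at $k+1$ starts from $(\mu_{k+1},\Sigma_{k+1}) = (\mu_{k+1|k}^{\ast},\Sigma_{k+1|k}^{\ast})$, which is exactly the pair of first two moments produced at $t=k+1$ by $\rho_{k|k}^{\ast}(\cdot)$ acting on $x_{k|k}^{\ast}\sim\mathcal{N}(\mu_k,\Sigma_k)$ under the realized parameter block $S_k$. Hence the candidate trajectory at $k+1$ and the optimal trajectory at $k$ are driven over $t = k+1,\dots,k+N$ by the same feedback laws from the same initial distribution, so their state and input laws coincide stagewise for every admissible realization of $S_{k+1},\dots,S_{k+N-1}$. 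Consequently the chance constraints \eqref{const:state_chance}--\eqref{const:control_chance} of the $k+1$ problem at the overlap stages $t = k+1,\dots,k+N-1$ are inherited verbatim from feasibility at time $k$, and the candidate's moments at $t=k+N$ equal $(\mu_{k+N|k}^{\ast},\Sigma_{k+N|k}^{\ast})$, which by \eqref{const:terminal_mean}--\eqref{const:terminal_cov} satisfy $\mu_{k+N|k}^{\ast}\in\mathcal{X}_f^{\mu}$ and $\Sigma_{k+N|k}^{\ast}\preceq\Sigma_f$ for every such realization.

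It then remains to verify the three requirements of the $k+1$ problem that are \emph{not} in the overlap: the state/input chance constraints at the appended stage $t=k+N$, and the terminal mean/covariance constraints \eqref{const:terminal_mean}--\eqref{const:terminal_cov} at $t=k+N+1$. Here I would invoke \eqref{eq:terminal_invariance} with $(\mu,\Sigma)=(\mu_{k+N|k+1},\Sigma_{k+N|k+1})$, which the previous step placed in $\mathcal{X}_f^{\mu}\times\{\Sigma\preceq\Sigma_f\}$, and with $(A,B,D,r)$ ranging over $\mathcal{A}\times\mathcal{B}\times\mathcal{D}\times\mathcal{R}$: \eqref{eq:term_imp_state_chance} yields the state chance constraints at $t=k+N$, \eqref{eq:term_imp_control_invariance} yields the input chance constraints there under $u=\rho(x)$, and \eqref{eq:term_imp_mean_inv}--\eqref{eq:term_imp_sigma_inv} --- which are precisely the moment maps \eqref{eq:moment_dynamics} evaluated with the $\bar u,\Sigma_{xu},\Sigma_u$ induced by $\rho(\cdot)$ --- give $\mu_{k+N+1|k+1}\in\mathcal{X}_f^{\mu}$ and $\Sigma_{k+N+1|k+1}\preceq\Sigma_f$. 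Since every item holds for every admissible parameter realization, the candidate is feasible for Problem~\eqref{prob:smpc} at $k+1$ under \eqref{eq:initialization}, which closes the induction and establishes feasibility for all $k\geq 0$.

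The step I expect to be the main obstacle is making the time‑varying/robust bookkeeping airtight. Because the matrices are only known to lie in the hull \eqref{eq:convex_hull}, ``feasibility at $k$'' must be read as feasibility for \emph{every} admissible future parameter sequence, and one must argue carefully that (i) the shifted trajectory matches the optimal one stagewise for each fixed realization, including that the common stages of the two problems' chance constraints genuinely coincide; (ii) \eqref{eq:terminal_invariance} is exactly the right one‑step, robustly controlled invariance property (in distribution) of $\mathcal{X}_f^{\mu}\times\{\Sigma\preceq\Sigma_f\}$, so that appending a single stage of $\rho(\cdot)$ repairs precisely the one extra stage $S_{k+N}$ that the horizon at $k+1$ sees beyond the horizon at $k$, and no constraint at that ``seam'' is left unchecked; and (iii) the distribution‑level matching in (i) is legitimate, which relies on the chance constraints depending only on the propagated moments --- as in the affine‑feedback, Gaussian‑noise formulation of \cite{knaup2023safe} --- rather than on finer features of the law of $x_{t|k}$. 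Verifying that \eqref{eq:moment_dynamics} is closed under the shifted‑plus‑appended construction is the delicate part; the rest is routine.
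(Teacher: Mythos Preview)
Your proposal is correct and follows essentially the same approach as the paper: an induction step in which the candidate at $k+1$ is the shifted optimal policy $\{\rho_{k+1|k}^{\ast},\dots,\rho_{k+N-1|k}^{\ast}\}$ with $\rho(\cdot)$ appended, the initialization~\eqref{eq:initialization} is used to identify the shifted predicted distributions with those of the time-$k$ solution over the overlap, and the terminal invariance~\eqref{eq:terminal_invariance} is invoked to discharge the state/input chance constraints at $t=k+N$ and the terminal mean/covariance constraints at $t=k+N+1$. One minor remark: in this paper the matrices $A_t,B_t,D_t,r_t$ are time-varying but \emph{known} along the prediction window, so ``feasibility at $k$'' is for the given sequence $S_k,\dots,S_{k+N-1}$ rather than for every admissible sequence; the robustness over $\mathcal{A}\times\mathcal{B}\times\mathcal{D}\times\mathcal{R}$ enters only through the appended stage, exactly as you use it.
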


\begin{proof}
    It is sufficient to show that feasibility at an arbitrary time-step $k$ implies feasibility at the next time-step $k+1$. 
    To that end, let an optimal solution $\bm{\rho}_{k}^{\ast N-1}(\cdot)$ to Problem \eqref{prob:smpc} at time $k$ exist with corresponding optimal control, state, and moment sequences $\bm{u}_{k}^{\ast N-1}$, $\bm{x}_{k}^{\ast N}$, $\bm{\mu}_{k}^{\ast N}$, and $\bm{\Sigma}_{k}^{\ast N}$.
    We claim that a feasible solution at time $k+1$ is given by
    \begin{align} \label{eq:feasible_policy_sequence}
        &\bm{\rho}_{k+1}^{N-1}(\cdot) = \{\rho_{k+1|k+1}(\cdot), \dots, \rho_{k+N|k+1}(\cdot) \} \nonumber\\
        &=  \{\rho_{k+1|k}^{\ast}(\cdot), \dots, \rho_{k+N-1|k}^{\ast}(\cdot), \rho(\cdot) \},
    \end{align}
    generating the control sequence
    \begin{align} \label{eq:feasible_control_sequence}
        &\bm{u}_{k+1}^{N-1} = \{u_{k+1|k+1}, \dots, u_{k+N|k+1} \}, 
        % \nonumber\\
        % &=  \{\rho_{k+1|k}^{\ast}(x_{k+1|k}^{\ast}), \dots, \rho_{k+N-1|k}^{\ast}(x_{k+N-1|k}^{\ast}), \nonumber\\
        % &\qquad \rho_{k+N|k+1}(x_{k+N|k}^{\ast}) \},
    \end{align} 
    and the predicted state sequence 
    \begin{align}
        \bm{x}_{k+1}^{N} = \{x_{k+1|k+1}, \dots, x_{k+N+1|k+1} \},
    \end{align}
    with corresponding moment sequences
    \begin{align}
        &\bm{\mu}_{k+1}^{N} = \{\mu_{k+1|k+1}, \dots, \mu_{k+N+1|k+1} \} \nonumber\\
        &\quad = \{\mu_{k+1|k}^{\ast}, \mu_{k+2|k}^{\ast}, \dots, \mu_{k+N|k}^{\ast}, \mu_{k+N+1|k+1}\}, \\
        &\bm{\Sigma}_{k+1}^{N} = \{\Sigma_{k+1|k+1}, \dots, \Sigma_{k+N+1|k+1} \} \nonumber\\
        &\quad = \{\Sigma_{k+1|k}^{\ast}, \Sigma_{k+2|k}^{\ast}, \dots, \Sigma_{k+N|k}^{\ast}, \Sigma_{k+N+1|k+1}\},
    \end{align}
    where $u_{t|k+1} = \rho_{t|k+1}(x_{t|k+1})$, $x_{t|k+1} \sim \mathcal{N}(\mu_{t|k+1}, \Sigma_{t|k+1})$, for $t = k+1, \dots, k+N+1$.
    
    To show that \eqref{eq:feasible_policy_sequence} indeed results in a feasible solution of Problem~\eqref{prob:smpc}, 
    first, notice that under initialization \eqref{eq:initialization}, $x_{k+1|k+1} \sim \mathcal{N}(\mu_{k+1|k}^{\ast}, \Sigma_{k+1|k}^{\ast})$.
    Then, the sequences $\{u_{k+1|k+1}, \dots, u_{k+N-1|k+1} \}$, $\{x_{k+1|k+1}, \dots, x_{k+N-1|k+1} \}$ necessarily satisfy \eqref{const:state_chance} and \eqref{const:control_chance} as, due to \eqref{eq:initialization}, they will be distributed the same as those derived from a feasible solution of \eqref{prob:smpc}.
    Moreover, $x_{k+N|k+1}$ will be distributed according to $\mathcal{N}(\mu^{\ast}_{k+N|k}, \Sigma^{\ast}_{k+N|k})$ and satisfies \eqref{const:state_chance} because it necessarily satisfies the terminal constraints \eqref{const:terminal_mean}-\eqref{const:terminal_cov} as a solution of Problem~\eqref{prob:smpc}, which, under \eqref{eq:term_imp_state_chance}, implies satisfaction of \eqref{const:state_chance}.
    Finally, due to \eqref{eq:terminal_invariance}, and since, as a solution of \eqref{prob:smpc}, $\mu_{k+N|k}^{\ast} \in \mathcal{X}_{f}^{\mu}$ and $\Sigma_{k+N|k}^{\ast} \preceq \Sigma_f$, it is known that
    there exists $\rho$ such that $u_{k+N|k+1} = \rho_{k+N|k+1}(x_{k+N|k+1}) = \rho(x_{k+N|k+1})$ satisfying \eqref{const:control_chance}, and such that $x_{k+N+1|k+1}$ satisfies \eqref{const:terminal_mean}-\eqref{const:terminal_cov}.
\end{proof}

Thus, we have established sufficient conditions for the recursive feasibility of Problem~\eqref{prob:smpc} under the initialization~\eqref{eq:initialization}. 
Next, we show how \eqref{eq:terminal_invariance} can be used to design the terminal set $\mathcal{X}_{f}^{\mu}$ and the terminal covariance $\Sigma_f$ so as to ensure that the 
Problem~\eqref{prob:smpc} is recursively feasible.
As a direct consequence of that result, we will show that the 
convex CS-SMPC formulation presented in \cite{knaup2023safe} is also recursively feasible.

\section{Terminal Constraint Design} \label{sec:terminal_constraints}

% Problem~\eqref{prob:smpc} may be cast as the equivalent deterministic problem 
% \begin{subequations} \label{prob:det_mpc}
%     \begin{align}
%         &\min_{\bm{u}_{k|k}^{N-1}} J_N(\mu_k, \Sigma_k; \bm{u}_{k|k}^{N-1}) 
%         = \sum_{t=k}^{k+N-1} \trace(Q_{t} \Sigma_{t|k}) + \trace(R_t \Sigma_{u_{t|k}}) \nonumber\\
%         &+ (\mu_{t|k} - x_{g_t})^{\top} Q_{t} (\mu_{t|k} - x_{g_t}) + \bar{u}_{t|k}^\top R_{t} \bar{u}_{t|k} \\
%         &\text{subject to} \nonumber\\
%         &\mu_{k|k} = \mu_k, \quad \Sigma_{k|k} = \Sigma_{k}, \\
%         &\mu_{t+1|k} = A_{t} \mu_{t|k} + B_{t} \bar{u}_{t|k} + r_{t}, \\
%         &\Sigma_{t+1|k} = A_t \Sigma_{t|k} A_t^\top + A_t \Sigma_{xu_{t|k}} B_t^\top + B_t \Sigma_{xu_{t|k}}^\top A_t^\top \nonumber\\
%         &+ B_t \Sigma_{u_t} B_t^\top + D_t D_t^\top, \\
%         &\alpha_{x, i}^{\top} \mu_{t|k} + \sqrt{\alpha_{x, i}^{\top} \Sigma_{t|k} \alpha_{x, i}} \Phi^{-1}(1 - p_{x, i}) \leq \beta_{x, i} , \label{const:det_state_chance} \\
%         &\alpha_{u, j}^{\top} \bar{u}_{t|k} + \sqrt{\alpha_{u, j}^{\top} \Sigma_{u_{t|k}} \alpha_{u, j}} \Phi^{-1}(1 - p_{u, j}) \leq \beta_{u, j}, \label{const:det_control_chance} \\
%         &\mu_{k+N|k} \in \mathcal{X}_{f}^{\mu}, \quad
%         \Sigma_{k+N|k} \preceq \Sigma_f,
%     \end{align}
% \end{subequations}
% where $\bar{u}_{t|k} = \Expectation[u_{t|k} | \mu_k, \Sigma_k]$, and $\Phi^{-1}(\cdot)$ is the inverse normal cumulative distribution function.

To compute the terminal set $\mathcal{X}_{f}^{\mu}$ and the terminal covariance $\Sigma_f$ satisfying \eqref{eq:terminal_invariance}, we introduce the candidate control parameterization
\begin{align} \label{eq:state_feedback}
    u_{t|k} &= \rho_{t|k}(x_{t|k}) =  v_{t|k} + L_{t|k} (x_{t|k} - \mu_{t|k}),
\end{align}
where $v_{t|k} \in \Re^{n_u}$ and $L_{t|k} \in \Re^{n_u \times n_x}$. Under \eqref{eq:state_feedback}, $\bar{u}_{t|k} = v_{t|k}$, $\Sigma_{xu_{t|k}} = \Sigma_{t|k} L_{t|k}^{\top}$, and $\Sigma_{u_{t|k}} = L_{t|k} \Sigma_{t|k} L_{t|k}^{\top}$. 
%Let $\tilde{L} \in \Re^{n_u \times n_x}$ be a constant robust gain representing a potential choice for $L_{t|k}~\forall~t,~k$.

\begin{lem} \label{lem:terminal_covariance}
    A terminal covariance matrix $\Sigma_f$ satisfying \eqref{eq:term_imp_sigma_inv} is given by 
    %$\tilde{\Sigma}^{\ast}$, where $\tilde{\Sigma}^{\ast}$ is given by 
    the solution to the semidefinite program~(SDP)
    \begin{subequations} \label{prob:terminal_covariance_SDP}
    \begin{align}
        &\min_{\tilde{\Sigma}, Z}~~\trace(\tilde{\Sigma}) \\
        &\text{subject to} \nonumber\\
        &
        \begin{bmatrix}
            \tilde{\Sigma} - D^{\ell} D^{\ell \top} & A^{\ell} \tilde{\Sigma} + B^{\ell} Z \\
            \tilde{\Sigma} A^{\ell \top} +  Z^\top B^{\ell \top} & \tilde{\Sigma}
        \end{bmatrix} \succeq 0, \label{const:LMI}
    \end{align}
    \end{subequations}
    for all $\ell = 1, \dots, N_{p}$, where $\tilde{\Sigma} \in \Re^{n_x \times n_x}$ and $Z \in \Re^{n_u \times n_x}$.
\end{lem}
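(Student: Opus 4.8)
The plan is to take $\Sigma_f=\tilde\Sigma$, where $(\tilde\Sigma,Z)$ solves the SDP~\eqref{prob:terminal_covariance_SDP}, and to exhibit the required policy $\rho(\cdot)$ as the linear feedback~\eqref{eq:state_feedback} with the \emph{constant} gain $L=Z\tilde\Sigma^{-1}$ (the feedforward term $v$ does not enter~\eqref{eq:term_imp_sigma_inv}, so it may be chosen arbitrarily). First I would substitute the identities $\Sigma_{xu}=\Sigma L^\top$ and $\Sigma_u=L\Sigma L^\top$, recorded just before the lemma, into the left-hand side of~\eqref{eq:term_imp_sigma_inv}; it collapses to $(A+BL)\Sigma(A+BL)^\top+DD^\top$. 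Thus the lemma reduces to the claim that $(A+BL)\Sigma(A+BL)^\top+DD^\top\preceq\tilde\Sigma$ for every $\Sigma\preceq\tilde\Sigma$ and every $\begin{bmatrix}A&B&D&r\end{bmatrix}\in\mathcal{S}$.

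Second, I would dispose of the two sources of variation in turn. The dependence on $\Sigma$ is monotone: since $\Sigma\preceq\tilde\Sigma$, congruence gives $(A+BL)\Sigma(A+BL)^\top\preceq(A+BL)\tilde\Sigma(A+BL)^\top$, so it suffices to treat $\Sigma=\tilde\Sigma$. The dependence on the system data is handled by convexity: writing $\begin{bmatrix}A&B&D&r\end{bmatrix}=\sum_{\ell=1}^{N_p}\lambda_\ell S^\ell$ as in~\eqref{eq:convex_hull}, we have $A+BL=\sum_\ell\lambda_\ell M^\ell$ with $M^\ell=A^\ell+B^\ell L$, and $D=\sum_\ell\lambda_\ell D^\ell$; the maps $M\mapsto M\tilde\Sigma M^\top$ (as $\tilde\Sigma\succeq0$) and $D\mapsto DD^\top$ are operator convex, since $\sum_\ell\lambda_\ell M^\ell\tilde\Sigma M^{\ell\top}-\big(\sum_\ell\lambda_\ell M^\ell\big)\tilde\Sigma\big(\sum_\ell\lambda_\ell M^\ell\big)^\top=\sum_{i,j}\tfrac12\lambda_i\lambda_j(M^i-M^j)\tilde\Sigma(M^i-M^j)^\top\succeq0$. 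Hence, for a generic $S\in\mathcal{S}$, the left-hand side is dominated by $\sum_\ell\lambda_\ell\big(M^\ell\tilde\Sigma M^{\ell\top}+D^\ell D^{\ell\top}\big)$, so it suffices to verify the vertex inequality $M^\ell\tilde\Sigma M^{\ell\top}+D^\ell D^{\ell\top}\preceq\tilde\Sigma$ for each $\ell=1,\dots,N_p$ and then take the convex combination (each summand being $\preceq\tilde\Sigma$, so is their convex combination).

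Third, I would identify this vertex inequality with the LMI~\eqref{const:LMI}. Assuming $\tilde\Sigma\succ0$ — which I would append as an explicit constraint, since~\eqref{prob:smpc} requires $\Sigma_f\succ0$ in any case — the Schur complement of~\eqref{const:LMI} with respect to its lower-right block $\tilde\Sigma$ reads
\[
\tilde\Sigma-D^\ell D^{\ell\top}-(A^\ell\tilde\Sigma+B^\ell Z)\,\tilde\Sigma^{-1}\,(\tilde\Sigma A^{\ell\top}+Z^\top B^{\ell\top})\succeq0 .
\]
Substituting $Z=L\tilde\Sigma$ and cancelling, $(A^\ell\tilde\Sigma+B^\ell Z)\tilde\Sigma^{-1}(\tilde\Sigma A^{\ell\top}+Z^\top B^{\ell\top})=(A^\ell+B^\ell L)\tilde\Sigma(A^\ell+B^\ell L)^\top=M^\ell\tilde\Sigma M^{\ell\top}$, so the displayed inequality is exactly the vertex inequality. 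Chaining the three steps yields~\eqref{eq:term_imp_sigma_inv} with $\rho(\cdot)$ as above, proving the lemma.

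The main (and essentially only non-mechanical) obstacle is the second step: because the left-hand side of~\eqref{eq:term_imp_sigma_inv} is \emph{quadratic}, not affine, in the uncertain data $S$, it is not automatic that checking the finitely many generators $S^1,\dots,S^{N_p}$ is enough; the reduction rests on the operator convexity of $M\mapsto M\tilde\Sigma M^\top$ together with the fact that the gain $L$ is chosen uniformly over the polytope. The remaining ingredients — monotonicity of congruence in $\Sigma$, the Schur-complement manipulation, and the change of variables $Z=L\tilde\Sigma$ — are routine. I would also add a remark that feasibility of~\eqref{prob:terminal_covariance_SDP} is equivalent to quadratic (mean-square) stabilizability of the polytopic family by a common gain, which is a standard and numerically checkable condition.
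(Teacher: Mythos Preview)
Your proposal is correct and follows the same overall skeleton as the paper's proof: restrict to the affine feedback~\eqref{eq:state_feedback}, reduce~\eqref{eq:term_imp_sigma_inv} to the closed-loop Lyapunov-type inequality $(A+BL)\tilde\Sigma(A+BL)^\top+DD^\top\preceq\tilde\Sigma$ (using monotonicity in $\Sigma$ to replace $\Sigma$ by $\tilde\Sigma$), reduce the polytopic uncertainty to the vertices, and then recognize the vertex inequality as the Schur complement of~\eqref{const:LMI} under the change of variables $Z=L\tilde\Sigma$.

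The one place where your argument differs from the paper is precisely the step you flag as the only non-mechanical one, namely the vertex reduction. You handle it by the explicit operator-convexity identity for $M\mapsto M\tilde\Sigma M^\top$ (and $D\mapsto DD^\top$). The paper instead lifts the quadratic inequality to the $3\times3$ LMI
\[
\begin{bmatrix}
\tilde\Sigma & A+B\tilde L & D\\
(A+B\tilde L)^\top & \tilde\Sigma^{-1} & 0\\
D^\top & 0 & I
\end{bmatrix}\succeq 0,
\]
which is \emph{affine} in $(A,B,D)$; the vertex reduction is then immediate from~\eqref{eq:convex_hull} without any convexity computation, and a second Schur complement (eliminating the $I$ block) brings the inequality back down to the $2\times2$ form before the congruence by $\mathrm{blkdiag}(I,\tilde\Sigma)$ and the substitution $Z=\tilde L\tilde\Sigma$. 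Your route is self-contained and arguably more elementary; the paper's lifting trick is slicker in that it sidesteps the quadratic dependence on the data altogether. Either way the conclusion is the same, and your remark about appending $\tilde\Sigma\succ0$ matches the paper's proviso stated immediately after the lemma.
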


\begin{proof}
    Under the control policy parameterization~\eqref{eq:state_feedback}, $\rho(x) = v + L(x - \mu)$,  \eqref{eq:term_imp_sigma_inv} reduces to the existence of a $L \in \Re^{n_u \times n_x}$, such that $(A + B L) \Sigma (A + B L)^\top + D D^\top \preceq \Sigma_{f}$, for all $A \in \mathcal{A}$, $B \in \mathcal{B}$, $D \in \mathcal{D}$, and $\Sigma \preceq \Sigma_f$.
    Thus, a terminal covariance matrix $\Sigma_f$ satisfying \eqref{eq:term_imp_sigma_inv}, is given by a matrix $\tilde{\Sigma}$ satisfying
    \begin{align} \label{eq:terminal_cov_lyap}
        \tilde{\Sigma} \succeq (A + B \tilde{L}) \tilde{\Sigma} (A + B \tilde{L})^{\top} + D D^{\top},
    \end{align}
    for all $A \in \mathcal{A}$, $B \in \mathcal{B}$, $D \in \mathcal{D}$ and for a particular choice of $\tilde{L} \in \Re^{n_u \times n_x}$.
    Similar to the approach in \cite{gonzalez2010adaptive}, using the Schur complement, \eqref{eq:terminal_cov_lyap} may be formulated as a linear matrix inequality (LMI) given by
    \begin{align} \label{eq:LMI}
        M = 
        \begin{bmatrix}
            \tilde{\Sigma} & (A + B \tilde{L}) & D \\
            (A + B \tilde{L})^{\top} & \tilde{\Sigma}^{-1} & 0 \\
            D^{\top} & 0 & I
        \end{bmatrix} \succeq 0,
    \end{align}
    for all $A \in \mathcal{A}$, $B \in \mathcal{B}$, $D \in \mathcal{D}$.
    
    % 
    % Considering \eqref{eq:convex_hull}, it may be seen $M \in \mathrm{co}\{M^{\ell}\}_{\ell=1}^{N_p}$, where $M^{\ell}$ is obtained from $M$ by replacing $A$, $B$, and $D$ with $A^{\ell}$, $B^{\ell}$, and $D^{\ell}$, respectively.
    % Therefore, if $M^{\ell} \succeq 0$ for all $\ell = 1, \dots, N_p$, 
    % % we have the following $M = \sum_{i=1}^{N_p} \lambda^{\ell} M^{\ell} \succeq 0$.
    % then $M \succeq 0$ for all $A \in \mathcal{A}$, $B \in \mathcal{B}$, $D \in \mathcal{D}$.
    % % Thus, satisfaction of \eqref{eq:terminal_cov_lyap} for the extrema ensures satisfaction for all parameters, and it may be seen \eqref{eq:terminal_cov_lyap} ensures \eqref{eq:term_imp_sigma_inv}.
    % % 
    Using the Schur complement and \eqref{eq:convex_hull}, it may be seen that satisfaction of \eqref{eq:LMI} is ensured by
    \begin{align} \label{eq:schur_LMI}
        \begin{bmatrix}
            \tilde{\Sigma} - D^{\ell} D^{\ell \top} & (A^{\ell} + B^{\ell} \tilde{L}) \\
            (A^{\ell} + B^{\ell} \tilde{L})^{\top} & \tilde{\Sigma}^{-1} \\
        \end{bmatrix} \succeq 0,
    \end{align}    
    for all $\ell = 1, \dots, N_p$.
    Finally, \eqref{eq:schur_LMI} may be converted to the equivalent LMI given by $\eqref{const:LMI}$, which is convex in $\tilde{\Sigma}$ and $Z$, by multiplying from both sides by $\mathrm{blkdiag}(I, \tilde{\Sigma})$ and using a change of variables given by $Z = \tilde{L} \tilde{\Sigma}$.
    Thus, $\Sigma_f$ satisfying \eqref{eq:term_imp_sigma_inv} may be found by solving the SDP \eqref{prob:terminal_covariance_SDP}.
\end{proof}

Let the optimal solution of Problem~\eqref{prob:terminal_covariance_SDP} be given by $\tilde{\Sigma}^{\ast},~Z^{\ast}$. 
The optimal control gain $\tilde{L}^{\ast}$ may be recovered from $\tilde{L}^{\ast} = Z^{\ast} \tilde{\Sigma}^{{\ast}^{-1}}$, provided $\tilde{\Sigma}^{\ast} \succ 0$. 
% Using the robust feedback gain
% $\tilde{L}^{\ast}$
% and the corresponding terminal covariance $\Sigma_{f} = \tilde{\Sigma}^{\ast}$,  the following lemma can be used to compute $\mathcal{X}_{f}^{\mu}$.
% 
We are now ready to introduce the second main result, which establishes how to compute $\mathcal{X}^{\mu}_{f}$ and $\Sigma_f$ so that the conditions of Theorem~\ref{thm:sufficient_conditions_recursive_feasibility} are satisfied.

% \begin{theorem} \label{thm:terminal_constraints}
\begin{thm} \label{thm:terminal_constraints}
    Let $\Sigma_f = \tilde{\Sigma}^{\ast}$ be given by \eqref{prob:terminal_covariance_SDP}, and let $\mathcal{X}_{f}^{\mu}$ be the largest set such that $\mathcal{X}_{f}^{\mu} \subseteq \mathcal{X}_{\mathrm{safe}}$ and such that, for all $\ell = 1, \dots, N_{p}$,
    \begin{align} \label{eq:terminal_set_comp}
        \mu \in \mathcal{X}_{f}^{\mu} \implies \exists~ v \in \mathcal{U}_{\mathrm{safe}} : A^{\ell} \mu + B^{\ell} v + r^{\ell} \in \mathcal{X}_{f}^{\mu},
    \end{align}
    where, 
    \begin{subequations} \label{eq:safe_sets}
    \begin{align}
        \mathcal{X}_{\text{safe}} &= \bigcap_{i = 1}^{N_x} \left\{\mu: \alpha_{x,i}^\top \mu \leq \tilde{\beta}_{x,i}\right\}, \label{eq:state_safe_set} \\
        \mathcal{U}_{\text{safe}} &= 
        \bigcap_{j = 1}^{N_u} \left\{v: \alpha_{u,j}^\top v \leq \tilde{\beta}_{u,j}\right\}, \label{eq:control_safe_set}
    \end{align}
    and where 
    \begin{align}
        &\hspace{-1.5mm} \tilde{\beta}_{x,i} = \beta_{x,i} - \sqrt{\alpha_{x, i}^{\top} \Sigma_{f} \alpha_{x, i}} \, \Phi^{-1}(1 - p_{x, i}), \label{eq:state_safe_tightening} \\
        &\hspace{-1.5mm} \tilde{\beta}_{u,j} = \beta_{u,j} - \sqrt{\alpha_{u, j}^{\top} \tilde{L}^{\ast} \Sigma_{f} \tilde{L}^{\ast \top} \alpha_{u, j}} \, \Phi^{-1}(1 - p_{u, j}). \label{eq:control_safe_tightening}
    \end{align}
    \end{subequations}
    % Then, $\mathcal{X}_{f}^{\mu}$ and $\Sigma_f$ satisfy \eqref{eq:terminal_invariance} for all $A \in \mathcal{A}$, $B \in \mathcal{B}$, $D \in \mathcal{D}$, and $r \in \mathcal{R}$.
% 
    Then, the conditions 
    \eqref{eq:terminal_invariance} of Theorem~\ref{thm:sufficient_conditions_recursive_feasibility} 
    for the recursive feasibility of Problem~\eqref{prob:smpc} are satisfied.
% \end{theorem}
\end{thm}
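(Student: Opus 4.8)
The plan is to instantiate, for an arbitrary $\mu \in \mathcal{X}_{f}^{\mu}$ and $\Sigma \preceq \Sigma_{f}$, a concrete policy of the affine form \eqref{eq:state_feedback} and then verify the four conditions \eqref{eq:term_imp_state_chance}--\eqref{eq:term_imp_sigma_inv} of Theorem~\ref{thm:sufficient_conditions_recursive_feasibility} one at a time. Specifically, I would take $\rho(x) = v + \tilde{L}^{\ast}(x - \mu)$, where $\tilde{L}^{\ast} = Z^{\ast}\tilde{\Sigma}^{\ast - 1}$ is the gain recovered from Lemma~\ref{lem:terminal_covariance} and $v \in \mathcal{U}_{\mathrm{safe}}$ is a feed-forward action whose existence is guaranteed by the invariance property \eqref{eq:terminal_set_comp} of $\mathcal{X}_{f}^{\mu}$ --- crucially, a single $v$ that renders $A^{\ell}\mu + B^{\ell}v + r^{\ell} \in \mathcal{X}_{f}^{\mu}$ for every vertex index $\ell$. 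As recorded after \eqref{eq:state_feedback}, this choice gives $\bar{u} = v$, $\Sigma_{xu} = \Sigma\tilde{L}^{\ast\top}$, and $\Sigma_{u} = \tilde{L}^{\ast}\Sigma\tilde{L}^{\ast\top}$, so the remaining work is purely verification.

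For the state chance constraints \eqref{eq:term_imp_state_chance}: since $x \sim \mathcal{N}(\mu, \Sigma)$, the scalar $\alpha_{x,i}^{\top}x$ is Gaussian, and I would rewrite $\Pr(\alpha_{x,i}^{\top}x \le \beta_{x,i}) \ge 1 - p_{x,i}$ in the equivalent deterministic form $\alpha_{x,i}^{\top}\mu + \Phi^{-1}(1 - p_{x,i})\sqrt{\alpha_{x,i}^{\top}\Sigma\alpha_{x,i}} \le \beta_{x,i}$. From $\Sigma \preceq \Sigma_{f}$ one has $\alpha_{x,i}^{\top}\Sigma\alpha_{x,i} \le \alpha_{x,i}^{\top}\Sigma_{f}\alpha_{x,i}$, and invoking the standard assumption $p_{x,i} \le 1/2$ so that $\Phi^{-1}(1 - p_{x,i}) \ge 0$, the left-hand side is bounded above by $\alpha_{x,i}^{\top}\mu + \Phi^{-1}(1 - p_{x,i})\sqrt{\alpha_{x,i}^{\top}\Sigma_{f}\alpha_{x,i}}$; combining this with $\alpha_{x,i}^{\top}\mu \le \tilde{\beta}_{x,i}$ (valid since $\mu \in \mathcal{X}_{f}^{\mu} \subseteq \mathcal{X}_{\mathrm{safe}}$) and substituting the definition \eqref{eq:state_safe_tightening} of $\tilde{\beta}_{x,i}$ produces exactly $\beta_{x,i}$. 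The control chance constraints \eqref{eq:term_imp_control_invariance} follow by the identical computation applied to $\alpha_{u,j}^{\top}u \sim \mathcal{N}\!\big(\alpha_{u,j}^{\top}v,\ \alpha_{u,j}^{\top}\tilde{L}^{\ast}\Sigma\tilde{L}^{\ast\top}\alpha_{u,j}\big)$, using $\tilde{L}^{\ast}\Sigma\tilde{L}^{\ast\top} \preceq \tilde{L}^{\ast}\Sigma_{f}\tilde{L}^{\ast\top}$, the membership $v \in \mathcal{U}_{\mathrm{safe}}$, and the tightening \eqref{eq:control_safe_tightening}.

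For the mean invariance \eqref{eq:term_imp_mean_inv}: any admissible $(A, B, D, r)$ has $[A\ B\ D\ r] = \sum_{\ell} \lambda_{\ell} S^{\ell}$ for convex weights $\lambda_{\ell}$ by \eqref{eq:convex_hull}, so $A\mu + Bv + r = \sum_{\ell} \lambda_{\ell}\,(A^{\ell}\mu + B^{\ell}v + r^{\ell})$ is a convex combination of points of $\mathcal{X}_{f}^{\mu}$, which again lies in $\mathcal{X}_{f}^{\mu}$ because a maximal robustly control invariant subset of the polytope $\mathcal{X}_{\mathrm{safe}}$ is convex (being an intersection of convex pre-sets). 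Finally, the covariance invariance \eqref{eq:term_imp_sigma_inv} is a direct corollary of Lemma~\ref{lem:terminal_covariance}: substituting $\Sigma_{xu} = \Sigma\tilde{L}^{\ast\top}$ and $\Sigma_{u} = \tilde{L}^{\ast}\Sigma\tilde{L}^{\ast\top}$ collapses its left-hand side to $(A + B\tilde{L}^{\ast})\Sigma(A + B\tilde{L}^{\ast})^{\top} + DD^{\top}$, which Lemma~\ref{lem:terminal_covariance} certifies is $\preceq \Sigma_{f} = \tilde{\Sigma}^{\ast}$ for all $A \in \mathcal{A}$, $B \in \mathcal{B}$, $D \in \mathcal{D}$ and all $\Sigma \preceq \Sigma_{f}$. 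Assembling the four verifications establishes \eqref{eq:terminal_invariance}.

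The step I expect to be the main obstacle is not any single calculation but the robustness bookkeeping: the \emph{same} feed-forward $v$ and the \emph{same} gain $\tilde{L}^{\ast}$ must certify all four conditions simultaneously for every parameter realization in $\mathcal{S}$, not merely at the vertices. This is exactly why $\mathcal{X}_{f}^{\mu}$ must be taken robustly control invariant (so one $v$ works at all vertices) and why its convexity, together with the polytopic structure \eqref{eq:convex_hull}, is indispensable for lifting the vertex-wise mean and covariance guarantees to all of $\mathcal{S}$ --- with the covariance side handed off to the vertex LMIs \eqref{const:LMI} through Lemma~\ref{lem:terminal_covariance}. A secondary technical point that must not be overlooked is the monotonicity used in the chance-constraint step, namely that replacing $\Sigma$ by the larger $\Sigma_{f}$ only tightens each chance constraint, which relies on $\Phi^{-1}(1 - p) \ge 0$ and hence on all risk levels being at most $1/2$.
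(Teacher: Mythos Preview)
Your proposal is correct and follows essentially the same approach as the paper: instantiate the affine policy $\rho(x)=v+\tilde{L}^{\ast}(x-\mu)$, invoke the standard Gaussian reformulation of the chance constraints (the paper cites this as Lemma~3 of \cite{knaup2023safe}) together with the monotonicity under $\Sigma\preceq\Sigma_f$, use the vertex condition \eqref{eq:terminal_set_comp} plus the convex-hull description \eqref{eq:convex_hull} for mean invariance, and defer covariance invariance to Lemma~\ref{lem:terminal_covariance}. The only places where you are more explicit than the paper are in spelling out the convexity of $\mathcal{X}_f^{\mu}$ needed for the vertex-to-interior lift and the implicit $p\le 1/2$ assumption behind the monotonicity step; both are indeed used but left tacit in the paper's argument.
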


\begin{proof}
% \begin{pf}
    Let $\rho(x) = v + \tilde{L}^\ast (x - \mu)$ where $x \sim \mathcal{N}(\mu, \Sigma)$.
    Then, as shown in Lemma~3 of \cite{knaup2023safe}, the chance constraints \eqref{eq:term_imp_state_chance} and \eqref{eq:term_imp_control_invariance} may be reformulated as the deterministic constraints
    \begin{subequations} \label{eq:det_constraints}
        \begin{align}
            &\hspace{-1.5mm} \alpha_{x, i}^{\top} \mu + \sqrt{\alpha_{x, i}^{\top} \Sigma \alpha_{x, i}} \, \Phi^{-1}(1 - p_{x, i}) \leq \beta_{x, i} , \label{const:det_state_chance} \\
            &\hspace{-1.5mm} \alpha_{u, j}^{\top} v + \sqrt{\alpha_{u, j}^{\top} \tilde{L}^{\ast} \Sigma \tilde{L}^{\ast \top} \alpha_{u, j}} \, \Phi^{-1}(1 - p_{u, j}) \leq \beta_{u, j}, \label{const:det_control_chance}
        \end{align}
    \end{subequations}
    for all $i = 1, \dots, N_x$ and $j = 1, \dots, N_u$.
    It may be seen from \eqref{eq:safe_sets} that the deterministic constraints \eqref{eq:det_constraints} are satisfied for all $\Sigma \preceq \Sigma_f$, $\mu \in \mathcal{X}_{\text{safe}}$, and $v \in \mathcal{U}_{\text{safe}}$.
    Moreover, since $\mathcal{X}_{f}^{\mu} \subseteq \mathcal{X}_{\text{safe}}$, the chance constraints \eqref{eq:term_imp_state_chance} and \eqref{eq:term_imp_control_invariance} are  satisfied
    for all $\Sigma \preceq \Sigma_{f}$, $\mu \in \mathcal{X}_{f}^{\mu}$ and $v \in \mathcal{U}_{\text{safe}}$.
    Next, due to \eqref{eq:convex_hull} and \eqref{eq:state_feedback}, it may be seen that \eqref{eq:term_imp_mean_inv} is ensured for all $A \in \mathcal{A}$, $B \in \mathcal{B}$, and $r \in \mathcal{R}$ by the existence of a $v \in \Re^{n_u}$ such that $A^{\ell} \mu + B^{\ell} v + r^{\ell} \in \mathcal{X}_{f}^{\mu}$, for all $\ell = 1, \dots, N_p$ and for all $\mu \in \mathcal{X}^{\mu}_{f}$. Thus \eqref{eq:terminal_set_comp} is sufficient for satisfaction of \eqref{eq:term_imp_mean_inv}.
    Finally, by Lemma~\ref{lem:terminal_covariance}, setting $\Sigma_f$ to the optimal solution of \eqref{prob:terminal_covariance_SDP} ensures satisfaction of \eqref{eq:term_imp_sigma_inv}.
% 
    % Therefore, with $\mathcal{X}_f^{\mu}$ and $\Sigma_f$ given by \eqref{eq:terminal_set_comp} and \eqref{prob:terminal_covariance_SDP}, respectively, satisfies \eqref{eq:terminal_invariance}.
\end{proof}
% \end{pf}

% \begin{remark}
\begin{rem}
    The set $\mathcal{X}_{f}^{\mu}$, given by \eqref{eq:terminal_set_comp}, is often referred to as a maximal robust controlled invariant set and may be efficiently computed using the iterative procedure presented in, e.g., \cite{borrelli2017predictive, gonzalez2011online}.
    Although, any robust controlled invariant set is sufficient, using the maximal such set minimizes conservatism as, ideally, the terminal set should be as large as possible while ensuring recursive feasibility.
% \end{remark}
\end{rem}

\subsection{Closed-loop Chance Constraint Satisfaction}

Next, we show that ensuring recursive feasibility using the proposed approach not only guarantees Problem~\eqref{prob:smpc} will have a solution, but also ensures that the control policy~\eqref{eq:mpc_control_policy}
% , generated by solving Problem~\eqref{prob:smpc} 
will ensure the chance constraints~\eqref{eq:chance_constraints} will be satisfied in practice by the system under closed-loop operation, as desired.

\begin{thm} \label{thm:closed-loop_constraint_satisfaction}
    The control policy~\eqref{eq:mpc_control_policy} generated by solving Problem~\eqref{prob:smpc} with the terminal constraints $\Sigma_f$ and $\mathcal{X}_{f}^{\mu}$ given by \eqref{prob:terminal_covariance_SDP} and \eqref{eq:terminal_set_comp}, respectively, and using the initialization~\eqref{eq:initialization},
    ensures system~\eqref{eq:system_dynamics} satisfies
    \begin{subequations} \label{eq:chance_const_init_cond}
        \begin{align}
            \Pr(\alpha_{x, i}^\top x_k &\leq \beta_{x, i} | x_0) \geq 1 - p_{x, i}, \\
            \Pr(\alpha_{u, j}^\top u_k &\leq \beta_{u, j} | x_0) \geq 1 - p_{u, j}, 
        \end{align}
    \end{subequations}
    for $i = 1, 2, \dots, N_x$ and $j = 1, 2, \dots, N_u$ and $k = 0, 1, 2, \dots$, provided Problem~\eqref{prob:smpc} is feasible at time $k=0$.
\end{thm}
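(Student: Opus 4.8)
The plan is to leverage the recursive feasibility result already in hand and then to argue that the closed-loop law of the state at each time step coincides with the law predicted by the online optimal control problem solved at that step. First I would invoke Theorem~\ref{thm:terminal_constraints}, which guarantees that the chosen $\Sigma_f$ and $\mathcal{X}_f^\mu$ satisfy the hypotheses \eqref{eq:terminal_invariance} of Theorem~\ref{thm:sufficient_conditions_recursive_feasibility}; since Problem~\eqref{prob:smpc} is assumed feasible at $k=0$, Theorem~\ref{thm:sufficient_conditions_recursive_feasibility} yields feasibility at every $k\ge 0$ under the initialization~\eqref{eq:initialization}. In particular, at each time $k$ the first-stage constraints \eqref{const:state_chance}--\eqref{const:control_chance} with $t=k$ hold for the optimal predicted state $x^\ast_{k|k}\sim\mathcal{N}(\mu_k,\Sigma_k)$ and control $u^\ast_{k|k}=\rho^\ast_{k|k}(x^\ast_{k|k})$, where $(\mu_k,\Sigma_k)$ are the moments prescribed by \eqref{eq:initialization}.

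The core step is an induction on $k$ showing that, conditioned on $x_0$ and on the realized parameter sequence $S_0,\dots,S_{k-1}$, the closed-loop state $x_k$ of \eqref{eq:system_dynamics} driven by \eqref{eq:mpc_control_policy} is Gaussian with exactly the moments $(\mu_k,\Sigma_k)$ of \eqref{eq:initialization}. The base case $k=0$ is immediate from $(\mu_0,\Sigma_0)=(x_0,0)$. For the inductive step, I would use that the CS-SMPC optimizer is restricted to affine policies of the form \eqref{eq:state_feedback}, so $u_k=u^\ast_{k|k}=v^\ast_{k|k}+L^\ast_{k|k}(x_k-\mu_k)$; substituting into \eqref{eq:system_dynamics} with $w_k$ independent of $x_k$ shows $x_{k+1}$ is Gaussian with mean and covariance obtained by propagating $(\mu_k,\Sigma_k)$ through the realized $A_k,B_k,D_k,r_k$ using $\bar u_k=v^\ast_{k|k}$, $\Sigma_{xu_k}=\Sigma_k L^{\ast\top}_{k|k}$, $\Sigma_{u_k}=L^\ast_{k|k}\Sigma_k L^{\ast\top}_{k|k}$, which is precisely the recursion \eqref{eq:moment_dynamics}, i.e., $(\mu_{k+1},\Sigma_{k+1})$ by \eqref{eq:initialization}. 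Hence the closed-loop conditional law of $x_k$ equals the law of $x^\ast_{k|k}$ in the problem solved at time $k$.

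With this distributional identity in place, the constraint transfer is direct: since $u_k=\rho^\ast_{k|k}(x_k)$ and $x_k$ has, conditionally on $x_0$ and $S_0,\dots,S_{k-1}$, the same distribution as $x^\ast_{k|k}$, the feasible first-stage constraints of Problem~\eqref{prob:smpc} at time $k$ give $\Pr(\alpha_{x,i}^\top x_k\le\beta_{x,i}\mid x_0,S_0,\dots,S_{k-1})\ge 1-p_{x,i}$ and $\Pr(\alpha_{u,j}^\top u_k\le\beta_{u,j}\mid x_0,S_0,\dots,S_{k-1})\ge 1-p_{u,j}$. Finally I would apply the tower property, averaging over the parameter realizations $S_0,\dots,S_{k-1}$ (each admissible realization keeping the problem feasible by Theorem~\ref{thm:sufficient_conditions_recursive_feasibility}), to obtain \eqref{eq:chance_const_init_cond}.

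I expect the main obstacle to be the inductive distributional-consistency claim: one must argue carefully that the affine structure of the applied policy together with independence of the disturbance keeps the true closed-loop state Gaussian, and that the moment recursion \eqref{eq:moment_dynamics} embedded in the initialization \eqref{eq:initialization} is exactly the one induced by the realized dynamics under the applied control, so that no mismatch accumulates between the planned and the actual state distributions. A secondary point worth handling explicitly is the $k=0$ case, where $\Sigma_0=0$ collapses the state chance constraints to the deterministic requirement $x_0\in\mathcal{X}$, which is implied by feasibility of Problem~\eqref{prob:smpc} at $k=0$.
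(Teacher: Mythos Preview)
Your proposal is correct and follows essentially the same route as the paper: invoke recursive feasibility via Theorems~\ref{thm:sufficient_conditions_recursive_feasibility}--\ref{thm:terminal_constraints}, argue that the closed-loop state moments coincide with the initialization moments $(\mu_k,\Sigma_k)$ through the recursion~\eqref{eq:moment_dynamics}, and conclude that the first-stage chance constraints of the feasible problem at time $k$ transfer to the actual system conditioned on $x_0$. Your write-up is in fact more explicit than the paper's on two points---the inductive preservation of Gaussianity via the affine policy structure, and the conditioning on the parameter realizations $S_0,\dots,S_{k-1}$ followed by the tower property---but note that in the paper's setting the system matrices are deterministic (time-varying but known), so this latter conditioning is harmless but unnecessary.
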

\begin{proof}
    Examining constraints~\eqref{const:initial_state},~\eqref{const:state_chance}, and~\eqref{const:control_chance}, it is clear that recursive feasibility of Problem~\eqref{prob:smpc} under Theorems~\ref{thm:sufficient_conditions_recursive_feasibility}~and~\ref{thm:terminal_constraints} and control policy~\eqref{eq:mpc_control_policy} imply
    \begin{subequations} \label{eq:closed-loop_chance_constraints}
    \begin{align}
        \Pr(\alpha_{x, i}^\top x_k &\leq \beta_{x, i} | \mu_k, \Sigma_k) \geq 1 - p_{x, i}, \\
        \Pr(\alpha_{u, j}^\top u_k &\leq \beta_{u, j} | \mu_k, \Sigma_k) \geq 1 - p_{u, j},
    \end{align}
    \end{subequations}
    for all $k = 0, 1, \dots$,
    where the problem is initialized with $x_k \sim \mathcal{N}(\mu_k, \Sigma_k)$.
    Moreover, under initialization~\eqref{eq:initialization} which depends on the moment dynamics~\eqref{eq:moment_dynamics} and the applied control policy~\eqref{eq:mpc_control_policy}, we have
    \begin{subequations}\label{eq:closed-loop_moment_dynamics}
    \begin{align} 
        \mu_{k+1} &= A_{k} \mu_{k} + B_{k} \bar{u}_{k}^{\ast} + r_{k}, \\
        \Sigma_{k+1} &= A_k \Sigma_{k} A_k^\top + A_k \Sigma_{xu_{k}}^{\ast} B_k^\top + B_k \Sigma_{xu_{k}}^{\ast \top} A_k^\top \nonumber\\
        &+ B_k \Sigma_{u_{k}}^{\ast} B_k^\top + D_k D_k^\top,
    \end{align}
    \end{subequations}
    for all $k = 0, 1, \dots$,
    where
    $\bar{u}_{k}^{\ast} = \Expectation[u_{k} | \mu_{k}, \Sigma_{k}]$, 
    $\Sigma_{xu_{k}}^{\ast} = \Expectation[(x_k - \mu_{k}) (u_k - \bar{u}_{k}^{\ast})^\top | \mu_{k}, \Sigma_{k}]$,
    and $\Sigma_{u_{k}}^{\ast} = \Expectation[(u_k - \bar{u}_{k}^{\ast}) (u_k - \bar{u}_{k}^{\ast})^\top | \mu_{k}, \Sigma_{k}]$,
    and where $\mu_0 = x_0$ and $\Sigma_0 = 0$.
    Note that \eqref{eq:closed-loop_moment_dynamics} are precisely the moment dynamics for the system~\eqref{eq:system_dynamics} operating in closed-loop under the control policy~\eqref{eq:mpc_control_policy}.
    Therefore, since the problem is initialized with $x_k \sim \mathcal{N}(\mu_k, \Sigma_k)$, given $x_0$, $\bm{\rho}_{k}^{\ast N-1}(\cdot)$ and the moment dynamics evolve deterministically.
    Thus,
    \begin{subequations}
        \begin{align}
            \mu_{k} &= \Expectation[x_{k} | \mu_0, \Sigma_0] = \Expectation[x_{k} | x_0], \\
            \Sigma_k &= \Expectation[(x_k - \mu_k) (x_k - \mu_k)^{\top} | \mu_0, \Sigma_0] \nonumber\\
            &= \Expectation[(x_k - \mu_k) (x_k - \mu_k)^{\top} | x_0],
        \end{align}
    \end{subequations}
    for all $k = 0, 1, \dots$.
    Therefore, \eqref{eq:closed-loop_chance_constraints} implies \eqref{eq:chance_const_init_cond}.
\end{proof}

\begin{cor}
    Under the conditions of Theorem~\ref{thm:closed-loop_constraint_satisfaction}, 
    system~\eqref{eq:system_dynamics} will additionally satisfy
    \begin{subequations} \label{eq:chance_const_bool}
        \begin{align}
            \Pr(x_k \in \mathcal{X} | x_0) \geq 1 - p_x, \\
            \Pr(u_k \in \mathcal{U} | x_0) \geq 1 - p_u,
        \end{align}
    \end{subequations}
    where $p_x \geq \sum_{i=1}^{N_x} p_{x, i}$ and $p_u \geq \sum_{j=1}^{N_u} p_{u, i}$.
    % A similar result holds under Corollary~\ref{cor:}.
\end{cor}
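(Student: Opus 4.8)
The plan is to derive \eqref{eq:chance_const_bool} from the per-halfspace chance constraints \eqref{eq:chance_const_init_cond} already guaranteed by Theorem~\ref{thm:closed-loop_constraint_satisfaction} by means of a union bound (Boole's inequality). First I would observe that, by the definitions \eqref{eq:constraint_sets} of $\mathcal{X}$ and $\mathcal{U}$ as finite intersections of half-spaces, the complementary events are finite unions,
\[
\{x_k \notin \mathcal{X}\} = \bigcup_{i=1}^{N_x} \{\alpha_{x,i}^\top x_k > \beta_{x,i}\}, \qquad \{u_k \notin \mathcal{U}\} = \bigcup_{j=1}^{N_u} \{\alpha_{u,j}^\top u_k > \beta_{u,j}\}.
\]

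Next I would apply Boole's inequality conditioned on $x_0$. For the state constraint, this gives $\Pr(x_k \notin \mathcal{X} \mid x_0) \leq \sum_{i=1}^{N_x} \Pr(\alpha_{x,i}^\top x_k > \beta_{x,i} \mid x_0)$. By Theorem~\ref{thm:closed-loop_constraint_satisfaction}, each term on the right-hand side equals $1 - \Pr(\alpha_{x,i}^\top x_k \leq \beta_{x,i} \mid x_0) \leq p_{x,i}$, whence $\Pr(x_k \notin \mathcal{X} \mid x_0) \leq \sum_{i=1}^{N_x} p_{x,i} \leq p_x$, the last step using the hypothesis $p_x \geq \sum_{i=1}^{N_x} p_{x,i}$. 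Taking complements yields $\Pr(x_k \in \mathcal{X} \mid x_0) \geq 1 - p_x$. The argument for $\Pr(u_k \in \mathcal{U} \mid x_0) \geq 1 - p_u$ is verbatim the same, with $u_k$, $\alpha_{u,j}$, $\beta_{u,j}$, $p_{u,j}$, and $p_u$ replacing $x_k$, $\alpha_{x,i}$, $\beta_{x,i}$, $p_{x,i}$, and $p_x$, and using Theorem~\ref{thm:closed-loop_constraint_satisfaction} together with $p_u \geq \sum_{j=1}^{N_u} p_{u,j}$.

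I do not anticipate any substantive obstacle here. The only points requiring care are that the union bound must be applied to the complement (violation) events rather than to the constraint events themselves, and that the conditioning on $x_0$ is carried through each inequality — both of which are immediate, since Theorem~\ref{thm:closed-loop_constraint_satisfaction} already supplies the per-halfspace bounds conditioned on $x_0$. In particular, no independence of the individual half-space violations is needed, as Boole's inequality holds for arbitrary, possibly dependent, events; this is precisely why the aggregated risk levels $p_x$ and $p_u$ appear as (upper bounds on) the \emph{sums} of the component risk levels rather than as a tighter combination.
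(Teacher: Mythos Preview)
Your proposal is correct and matches the paper's own proof, which likewise invokes Boole's inequality together with the polytope definitions \eqref{eq:constraint_sets} and the per-halfspace bounds from Theorem~\ref{thm:closed-loop_constraint_satisfaction}. Your write-up simply makes explicit the complement/union-bound step that the paper leaves implicit.
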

\begin{proof}
    The result follows immediately from Theorem~\ref{thm:closed-loop_constraint_satisfaction} by applying Boole's inequality~\cite{prekopa1988boole} and the definition of the polytopes $\mathcal{X}$ and $\mathcal{U}$ given in \eqref{eq:constraint_sets}.
\end{proof}

In some cases, (e.g. \cite{knaup2023safe, farina2013probabilistic}) it may be desirable to reduce conservatism by replacing the static initialization strategy~\eqref{eq:initialization} with a dynamic strategy
\begin{equation} \label{eq:dynamic_reinitialization}
    (\mu_k, \Sigma_k) =
    \begin{cases}
       (x_k, 0), & x_k \in \mathcal{X}_{\text{feas}}, \\[5pt]
        (\mu^\ast_{k|k-1}, \Sigma^\ast_{k|k-1}), & \text{otherwise},
    \end{cases}
\end{equation}
where $\mathcal{X}_{\text{feas}} \subseteq \mathcal{X}$ is the set of states for which Problem~\eqref{prob:smpc} is feasible with $\mu_k = x_k$ and $\Sigma_k = 0$.
In this case, \eqref{eq:chance_const_init_cond} is not inherently satisfied.
Rather, this initialization strategy ensures a different notion of closed-loop chance constraint satisfaction.
\begin{cor}
    The control policy~\eqref{eq:mpc_control_policy} generated by solving Problem~\eqref{prob:smpc} with the terminal constraints $\Sigma_f$ and $\mathcal{X}_{f}^{\mu}$ given by \eqref{prob:terminal_covariance_SDP} and \eqref{eq:terminal_set_comp}, respectively, and using initialization~\eqref{eq:dynamic_reinitialization},
    ensures recursive feasibility and that the closed-loop operation of system~\eqref{eq:system_dynamics} will satisfy 
    \begin{subequations} \label{eq:chance_const_recond}
        \begin{align}
            \Pr(\alpha_{x, i}^\top x_k &\leq \beta_{x, i} | x_{\tau}) \geq 1 - p_{x, i}, \\
            \Pr(\alpha_{u, j}^\top u_k &\leq \beta_{u, j} | x_{\tau}) \geq 1 - p_{u, j}, 
        \end{align}
    \end{subequations}
    for $i = 1, 2, \dots, N_x$ and $j = 1, 2, \dots, N_u$ and $k \geq \tau \geq 0$, provided $x_0 \in \mathcal{X}_{\text{feas}}$, and where $\tau$ is the most-recent time-step at which $x_\tau \in \mathcal{X}_{\text{feas}}$.
\end{cor}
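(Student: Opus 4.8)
The plan is to reduce this corollary to the two results already proved: recursive feasibility will follow from Theorem~\ref{thm:sufficient_conditions_recursive_feasibility} together with the definition of $\mathcal{X}_{\text{feas}}$, and the closed-loop chance-constraint statement will follow by replaying the argument in the proof of Theorem~\ref{thm:closed-loop_constraint_satisfaction} with the last reset time $\tau$ playing the role that $k=0$ plays there. The structural fact that makes this work is that, whenever $x_\tau\in\mathcal{X}_{\text{feas}}$, rule~\eqref{eq:dynamic_reinitialization} sets $(\mu_\tau,\Sigma_\tau)=(x_\tau,0)$, which is exactly the form of the static initialization~\eqref{eq:initialization} at $k=0$.

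For recursive feasibility I would induct on $k$. Since $x_0\in\mathcal{X}_{\text{feas}}$, Problem~\eqref{prob:smpc} is feasible at $k=0$ by the definition of $\mathcal{X}_{\text{feas}}$. Assuming feasibility at time $k$, consider $k+1$: if $x_{k+1}\in\mathcal{X}_{\text{feas}}$ then~\eqref{eq:dynamic_reinitialization} uses $(\mu_{k+1},\Sigma_{k+1})=(x_{k+1},0)$ and feasibility is immediate by the definition of $\mathcal{X}_{\text{feas}}$; otherwise~\eqref{eq:dynamic_reinitialization} uses $(\mu_{k+1},\Sigma_{k+1})=(\mu^\ast_{k+1|k},\Sigma^\ast_{k+1|k})$, which is precisely the static initialization~\eqref{eq:initialization}, so the shifted-plus-terminal candidate policy~\eqref{eq:feasible_policy_sequence} built in the proof of Theorem~\ref{thm:sufficient_conditions_recursive_feasibility} --- whose feasibility uses only this initialization and the terminal properties secured by Theorem~\ref{thm:terminal_constraints} --- is feasible at $k+1$. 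This closes the induction.

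For the chance constraints, fix $k$ and let $\tau\le k$ be the most recent time step with $x_\tau\in\mathcal{X}_{\text{feas}}$ (well defined because $x_0\in\mathcal{X}_{\text{feas}}$). By maximality of $\tau$, no reset occurs in $(\tau,k]$, so on that interval~\eqref{eq:dynamic_reinitialization} coincides with $(\mu_t,\Sigma_t)=(\mu^\ast_{t|t-1},\Sigma^\ast_{t|t-1})$ while $(\mu_\tau,\Sigma_\tau)=(x_\tau,0)$. I would then reproduce the computation from the proof of Theorem~\ref{thm:closed-loop_constraint_satisfaction}: substituting the applied control~\eqref{eq:mpc_control_policy} into the dynamics~\eqref{eq:system_dynamics} and propagating via~\eqref{eq:moment_dynamics}/\eqref{eq:closed-loop_moment_dynamics}, an induction over $t=\tau,\dots,k$ shows $\mu_t=\Expectation[x_t\mid x_\tau]$ and $\Sigma_t=\Expectation[(x_t-\mu_t)(x_t-\mu_t)^\top\mid x_\tau]$, since (for a fixed deterministic choice of optimizer) the solution map of Problem~\eqref{prob:smpc} and the moment recursion are deterministic functions of $(\mu_t,\Sigma_t)$, hence of the reset value $(x_\tau,0)$. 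Recursive feasibility guarantees~\eqref{const:state_chance}--\eqref{const:control_chance} at $t=k$ with $x_{k|k}\sim\mathcal{N}(\mu_k,\Sigma_k)$ (the analog of~\eqref{eq:closed-loop_chance_constraints}), and identifying $(\mu_k,\Sigma_k)$ with the conditional moments given $x_\tau$ yields~\eqref{eq:chance_const_recond}.

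The main obstacle is the bookkeeping of the conditioning rather than any new estimate: $\tau$ is a trajectory-dependent stopping time, so one must phrase~\eqref{eq:chance_const_recond} on the event $\{x_\tau\in\mathcal{X}_{\text{feas}}\}\cap\{x_t\notin\mathcal{X}_{\text{feas}},\ \tau<t\le k\}$ and verify that the deterministic moment propagation used above is exactly the one executed on that event; and, for the statement to hold simultaneously for all later $k$, one must check that re-initializations occurring at intermediate reset times never enlarge the relevant covariance, so that the chance bound is preserved across resets via the tower property. Everything else is routine linear-Gaussian propagation and mirrors the argument already given for Theorem~\ref{thm:closed-loop_constraint_satisfaction}.
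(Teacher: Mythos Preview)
Your proposal is correct and follows essentially the same approach as the paper: recursive feasibility is obtained by observing that the first branch of~\eqref{eq:dynamic_reinitialization} covers $k=0$ while for $k>0$ the second branch reduces to the static initialization handled by Theorems~\ref{thm:sufficient_conditions_recursive_feasibility}~and~\ref{thm:terminal_constraints}, and the chance-constraint claim is obtained by rerunning the proof of Theorem~\ref{thm:closed-loop_constraint_satisfaction} with $x_\tau$ in place of $x_0$. Your write-up is in fact more explicit than the paper's very brief proof, and your final paragraph's worries about intermediate resets are unnecessary here since, by definition of $\tau$ as the most recent reset, no reset occurs on $(\tau,k]$.
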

\begin{proof}
    Recursive feasibility is ensured since the first case of initialization~\eqref{eq:dynamic_reinitialization} is ensured for $k=0$ and, due to Theorems~\ref{thm:sufficient_conditions_recursive_feasibility}~and~\ref{thm:terminal_constraints}, at least the second case will be ensured for $k > 0$.
    Closed-loop satisfaction of the chance-constraints~\eqref{eq:chance_const_recond} follows from a generalization of Theorem~\ref{thm:closed-loop_constraint_satisfaction} by replacing the fixed case $x_0$ with the dynamic case $x_\tau$ as the last state on which the problem was reconditioned.
\end{proof}

\subsection{Convex Formulation}

Theorem~\ref{thm:terminal_constraints} shows how to compute the terminal constraints under the control parameterization \eqref{eq:state_feedback}, so as to ensure recursive feasibility.
%by Theorem~\ref{thm:sufficient_conditions_recursive_feasibility}. 
However, formulating Problem~\eqref{prob:smpc} with the affine state feedback parameterization \eqref{eq:state_feedback} results in a nonconvex program \cite{okamoto2018optimal, farina2013probabilistic}. Next, we show that by a change of variables, we may arrive at a convex reformulation of Problem~\eqref{prob:smpc}.

\begin{thm} \label{thm:equivalant_control}
    Any state-feedback policy of the form \eqref{eq:state_feedback} may be represented by an 
    affine feedback policy, given by
    \begin{subequations} \label{eq:aux_var_parameterization}
        \begin{align}
            u_{t|k} &= v_{t|k} + \sum_{i=k}^{t} K_{t, i|k} y_{t|k}, \\
            y_{t+1|k} &= A_t y_{t|k} + D_{t} w_{t}, \quad   y_{k|k} = x_{k} - \mu_{k},
        \end{align}
        where $y_{t|k} \in \Re^{n_x}$ is an auxiliary variable that tracks the error evolution of the uncontrolled system.
    \end{subequations}
    Moreover, using the parameterization \eqref{eq:aux_var_parameterization}, Problem~\eqref{prob:smpc} may be cast as a convex program.
\end{thm}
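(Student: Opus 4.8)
The plan is to establish the two assertions of the theorem separately: first, that the state-feedback law \eqref{eq:state_feedback} is, after a change of variables, a special case of \eqref{eq:aux_var_parameterization}; and second, that with the decision variables of \eqref{eq:aux_var_parameterization} every term of Problem~\eqref{prob:smpc} is convex, which recovers Theorem~2 of \cite{knaup2023safe}. For the first assertion, introduce the closed-loop error $e_{t|k} \triangleq x_{t|k} - \mu_{t|k}$. Under \eqref{eq:state_feedback} we have $\bar{u}_{t|k} = v_{t|k}$, hence $e_{t+1|k} = (A_t + B_t L_{t|k})\,e_{t|k} + D_t w_t$ with $e_{k|k} = x_k - \mu_k = y_{k|k}$, while the auxiliary state satisfies $y_{t+1|k} = A_t y_{t|k} + D_t w_t$ with the same initial value. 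Solving the first recursion gives $e_{t|k} = \Phi_{t,k}\,y_{k|k} + \sum_{i=k}^{t-1}\Phi_{t,i+1}\,D_i w_i$, with $\Phi_{t,i} = \prod_{j=i}^{t-1}(A_j + B_j L_{j|k})$ and $\Phi_{t,t}=I$; substituting $D_i w_i = y_{i+1|k} - A_i y_{i|k}$ and collecting terms expresses $L_{t|k}\,e_{t|k}$ as $\sum_{i=k}^{t} K_{t,i|k}\,y_{i|k}$ for matrices $K_{t,i|k}$ determined by the $L_{j|k}$'s and the system matrices. Thus $u_{t|k} = v_{t|k} + L_{t|k}e_{t|k}$ has the form \eqref{eq:aux_var_parameterization}. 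In particular the terminal candidate policy $\rho(x) = v + \tilde{L}^\ast(x-\mu)$ from the proof of Theorem~\ref{thm:terminal_constraints} admits such a representation, so the guarantees of Theorems~\ref{thm:sufficient_conditions_recursive_feasibility}--\ref{thm:terminal_constraints} are inherited by the reformulated problem.

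For the convexity assertion, the crucial point is that under \eqref{eq:aux_var_parameterization} the auxiliary states do not depend on the decision variables: by \eqref{eq:initialization}, $y_{k|k}\sim\mathcal{N}(0,\Sigma_k)$, and $y_{t+1|k}=A_t y_{t|k}+D_t w_t$, so each $y_{t|k}$ is zero-mean Gaussian with a fixed covariance that is independent of $\{v_{t|k}\}$ and $\{K_{t,i|k}\}$. Stacking over the horizon, $\mathbf{u} = \mathbf{v} + \mathbf{K}\mathbf{y}$ where $\mathbf{y}$ is a fixed Gaussian vector and $\mathbf{K}$ is the block--lower-triangular matrix of the $K_{t,i|k}$; propagating this through the dynamics in \eqref{const:initial_state} makes the state trajectory $\mathbf{x}$ \emph{affine} in the pair $(\mathbf{v},\mathbf{K})$ with no products of decision variables --- in contrast to \eqref{eq:state_feedback}, where $L_{t|k}$ multiplies an error that already depends on earlier gains. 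Consequently $\mu_{t|k}=\Expectation[x_{t|k}]$ and $\bar{u}_{t|k}=v_{t|k}$ are affine in $(\mathbf{v},\mathbf{K})$, whereas $x_{t|k}-\mu_{t|k}$ and $u_{t|k}-\bar{u}_{t|k}$ are linear in $\mathbf{K}$ with fixed random coefficients, so $\Sigma_{t|k}$ and $\Sigma_{u_{t|k}}$ are positive-semidefinite quadratics in $\mathbf{K}$, each of the form $E(\mathbf{K})E(\mathbf{K})^\top$ with $E(\cdot)$ affine.

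It then remains to check each ingredient of Problem~\eqref{prob:smpc}. The cost equals $\sum_t\big[(\mu_{t|k}-x_{g_t})^\top Q_t(\mu_{t|k}-x_{g_t}) + \trace(Q_t\Sigma_{t|k}) + v_{t|k}^\top R_t v_{t|k} + \trace(R_t\Sigma_{u_{t|k}})\big]$, a convex quadratic in $(\mathbf{v},\mathbf{K})$; by Lemma~3 of \cite{knaup2023safe} the chance constraints \eqref{const:state_chance}--\eqref{const:control_chance} reduce to the deterministic form \eqref{eq:det_constraints}, whose left-hand sides are an affine term plus the (nonnegative) coefficient $\Phi^{-1}(1-p)$ times the Euclidean norm of an affine-in-$\mathbf{K}$ vector, i.e.\ second-order cone constraints; the terminal mean constraint \eqref{const:terminal_mean} is membership of the affine quantity $\mu_{k+N|k}$ in the polytope $\mathcal{X}_f^\mu$; and the terminal covariance constraint \eqref{const:terminal_cov}, $\Sigma_{k+N|k}\preceq\Sigma_f$, becomes the linear matrix inequality $\begin{bmatrix}\Sigma_f & E(\mathbf{K})\\ E(\mathbf{K})^\top & I\end{bmatrix}\succeq 0$ by the Schur complement. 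All constraints are convex and the objective is a convex quadratic, so Problem~\eqref{prob:smpc} under \eqref{eq:aux_var_parameterization} is a convex program, in agreement with Theorem~2 of \cite{knaup2023safe}. I expect the main obstacle to be the first step --- verifying that the change of variables from the $L_{j|k}$'s to the $K_{t,i|k}$'s is exact, together with the bookkeeping that confirms $(\mathbf{v},\mathbf{K})$ enter $\mathbf{x}$ affinely with no bilinear terms; the downstream reformulations (Gaussian chance-constraint tightening, Schur complement) are routine.
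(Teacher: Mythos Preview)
Your proposal is correct and reaches the same conclusion as the paper, but by a genuinely different route. For the first assertion, the paper stacks the horizon into the batch form~\eqref{eq:aug_sys}, splits into nominal and error parts, and obtains the explicit change of variables $K_{k|k}^{N-1}=L_{k|k}^{N-1}(I-\bar{B}_{k}^{N}L_{k|k}^{N-1})^{-1}$ in one line, with the block lower-triangular structure of $K$ inherited from that of $\bar{B}_{k}^{N}$. You instead work recursively: you write the closed-loop error via the transition products $\Phi_{t,i}$ and then eliminate the disturbances through the identity $D_i w_i=y_{i+1|k}-A_i y_{i|k}$, so that $L_{t|k}e_{t|k}$ becomes a linear combination of $y_{k|k},\dots,y_{t|k}$. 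Your argument is more elementary and sidesteps both the batch matrices and the invertibility of $I-\bar{B}_{k}^{N}L_{k|k}^{N-1}$; the paper's closed form, on the other hand, makes the map $L\mapsto K$ (and its invertibility) explicit. For the convexity assertion, the paper simply invokes Theorem~2 of~\cite{knaup2023safe}, whereas you give a self-contained sketch: the cost is a convex quadratic, the tightened chance constraints~\eqref{eq:det_constraints} are second-order cones, \eqref{const:terminal_mean} is polyhedral, and \eqref{const:terminal_cov} becomes an LMI via the Schur complement. Your treatment is therefore more complete on this point, while the paper's is more compact.
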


\begin{proof}
    The predicted dynamics of system \eqref{eq:system_dynamics} may be alternatively written as 
    \begin{align}\label{eq:aug_sys}
        X_{k|k}^{N} = \bar{A}_{k}^{N} x_{k|k} + \bar{B}_{k}^{N} U_{k|k}^{N-1} + \bar{D}_{k}^{N} W_{k}^{N-1} + \bar{r}_{k}^{N},
    \end{align}
    where 
    \begin{align*}
        X_{k|k}^{N} &= \begin{bmatrix}
        x_{k|k} \\ x_{k+1|k} \\ \vdots \\ x_{k+N|k}
        \end{bmatrix}, \quad
        U_{k|k}^{N-1} = \begin{bmatrix}
        u_{k|k} \\ u_{k+1|k} \\ \vdots \\ u_{k+N-1|k}
        \end{bmatrix},\\ 
        W_{k}^{N-1} &= \begin{bmatrix}
        w_{k} \\ w_{k+1} \\ \vdots \\ w_{k+N-1}
        \end{bmatrix},
    \end{align*}
    where
    $\bar{A}_{k}^{N} \in \Re^{(N+1)n_x \times n_x}$, $\bar{B}_{k}^{N} \in \Re^{(N+1)n_x \times N n_u}$, $\bar{D}_{k}^{N} \in \Re^{(N+1)n_x \times N n_w}$, $\bar{r}_{k}^{N} \in \Re^{(N+1)n_x}$, and $\bar{B}_{k}^{N}$ and $\bar{D}_{k}^{N}$ are lower block triangular. 
    The exact expressions for the system matrices $\bar{A}_{k}^{N}$, $\bar{B}_{k}^{N}$, $\bar{D}_{k}^{N}$ may be found in \cite{goldshtein2017finite}, for example.
    
    We introduce the parameterization \eqref{eq:state_feedback} and split \eqref{eq:aug_sys} into the nominal and error dynamics as follows 
    \begin{subequations}
        \begin{align}
            X_{k|k}^{N} &= \bar{X}_{k|k}^{N} + \tilde{X}_{k|k}^{N}, \\
            \bar{X}_{k|k}^{N} &= \bar{A}_{k}^{N} \mu_k + \bar{B}_{k}^{N} V_{k|k}^{N-1} + \bar{r}_{k}^{N}, \\
            \tilde{X}_{k|k}^{N} &= \bar{A}_{k}^{N} \tilde{x}_k + \bar{B}_{k}^{N} L_{k|k}^{N-1} \tilde{X}_{k|k}^{N} + \bar{D}_{k}^{N} W_{k}^{N-1} \nonumber\\
            &= (I - \bar{B}_{k}^{N} L_{k|k}^{N-1})^{-1} (\bar{A}_{k}^{N} \tilde{x}_k + \bar{D}_{k}^{N} W_{k}^{N-1}),
        \end{align}
        where $\tilde{x}_k = x_k - \mu_k$, and
        \begin{align}
            V_{k|k}^{N-1} &= \begin{bmatrix}
                v_{k|k} \\
                \vdots \\
                v_{k+N-1|k}
            \end{bmatrix}, \\
            L_{k|k}^{N-1} &= \begin{bmatrix}
                L_{k|k} & 0 & \dots & 0 & 0 \\
                0 & L_{k+1|k} & \dots & 0 & 0 \\
                \vdots & & \ddots & \vdots & \vdots \\
                0 & 0 & \dots & L_{k+N-1|k} & 0
            \end{bmatrix}.
        \end{align}
    \end{subequations}
    Therefore, 
    \begin{align}
        &U_{k|k}^{N-1} = V_{k|k}^{N-1} \nonumber\\
        &+ L_{k|k}^{N-1} (I - \bar{B}_{k}^{N} L_{k|k}^{N-1})^{-1} (\bar{A}_{k}^{N} \tilde{x}_k + \bar{D}_{k}^{N} W_{k}^{N-1}).
    \end{align}
    
    Introducing the change of variables 
    \begin{subequations} \label{eq:change_of_variables}
        \begin{align}
            K_{k|k}^{N-1} &= L_{k|k}^{N-1} (I - \bar{B}_{k}^{N} L_{k|k}^{N-1})^{-1}, \\
            Y_{k|k}^{N} &= (\bar{A}_{k}^{N} \tilde{x}_k + \bar{D}_{k}^{N} W_{k}^{N-1}),
        \end{align}
        yields the control parameterization
        \begin{align}
            U_{k|k}^{N-1} = K_{k|k}^{N-1} Y_{k|k}^{N} + V_{k|k}^{N-1},
        \end{align}
        where  $K_{k|k}^{N-1} =$ 
        \begin{align*}
           \hspace{-3mm}\begin{bmatrix}
                K_{k, k|k} & 0 & \dots & 0 & 0 \\
                K_{k+1, k|k} & K_{k+1, k+1|k} & \dots & 0 & 0 \\
                \vdots & & \ddots & \vdots & \vdots \\
                K_{k+N-1, k|k} &  K_{k+N-1, k+1|k} & \dots & K_{k+N-1, k+N-1|k} & 0
            \end{bmatrix},
        \end{align*}
    \end{subequations}
    where the lower triangular structure of $K_{k|k}^{N-1}$ results from the block lower triangular structure of 
   $\bar{B}_{k}^{N}$.
   %$(I - \bar{B}_{k}^{N} L_{k|k}^{N-1})^{-1}$ w
    
    The parameterization given by \eqref{eq:change_of_variables} is the augmented form of the affine feedback policy \eqref{eq:aux_var_parameterization}.
    In Theorem~2 of \cite{knaup2023safe}, we have shown that  Problem~\eqref{prob:smpc} may be cast as a convex program by introducing the affine feedback parameterization given by \eqref{eq:aux_var_parameterization}.
\end{proof}

\begin{rem}
    In \cite{knaup2023safe}, the feedback is applied only with respect to the current error (that is, $u_{t|k} = v_{t|k} + K_{t|k} y_{t|k}$). However, this is done merely to limit the number of decision variables and improve computation time. The proof of convexity in \cite{knaup2023safe} may clearly be extended to the more general case of \eqref{eq:aux_var_parameterization}.
\end{rem}

In summary, 
using Theorem~\ref{thm:sufficient_conditions_recursive_feasibility} we have established sufficient conditions to guarantee recursive feasibility of Problem~\eqref{prob:smpc}. Theorem~\ref{thm:terminal_constraints} shows how the terminal constraints may be designed so as to satisfy these conditions in practice.
Theorem~\ref{thm:closed-loop_constraint_satisfaction} establishes that the chance constraints will be satisfied under closed-loop operation as desired.
Finally, Theorem~\ref{thm:equivalant_control} shows that the results of the previous theorems extend to the convex problem formulation utilized in \cite{knaup2023safe}.

\section{Numerical Example}

We evaluate the proposed approach in a realistic vehicle lateral control example. 
We consider a nonlinear kinematic bicycle model, which, as in, e.g., \cite{wang2021path}, is linearized around a reference trajectory to obtain an LTV system. 
The nonlinear kinematic bicycle model, shown in Fig.~\ref{fig:bike}, is given by the following 
\begin{subequations}
    \begin{align}
        % \dot{v}_x &= a_x, \\
        % \dot{\delta} &= u_{\delta}, \\
        \dot{e}_{\psi} &= \dot{\psi} - \dot{\psi}_{\mathrm{ref}} + \dot{\nu}_y / \nu_x, \\
        \dot{e}_y &= \nu_y \cos{e_{\psi}} + \nu_x \sin{e_{\psi}}, \\
        \dot{s} &= (\nu_x \cos{e_{\psi}} - \nu_y \sin{e_{\psi}}) / (1 - e_{y} \rho),
    \end{align}
\end{subequations}
where
$\nu_y = \nu_x \delta \frac{\ell_r}{\ell_f + \ell_r}$,
$\dot{\psi} = \tan{\delta} \frac{\nu_x}{\ell_f + \ell_r}$,
and where
$e_{\psi}$ is the heading error with respect to the reference path,
$\psi$ is the vehicle's absolute heading,
$\psi_{\mathrm{ref}}$ is the heading of the reference path,
$\nu_y$ is the vehicle's lateral velocity,
$\nu_x$ is the vehicle's longitudinal velocity, 
$e_y$ is the vehicle's lateral error with respect to the reference path,
$s$ is the vehicle's longitudinal position with respect to the reference path,
$\rho$ is the curvature of the reference path,
$\delta$ is the front steering angle,
and $\ell_f$ and $\ell_r$ are the locations of the vehicle's center of mass.
\begin{figure}[ht]
    \centering
    \includegraphics[width=\linewidth, trim={433 333 500 167}, clip]{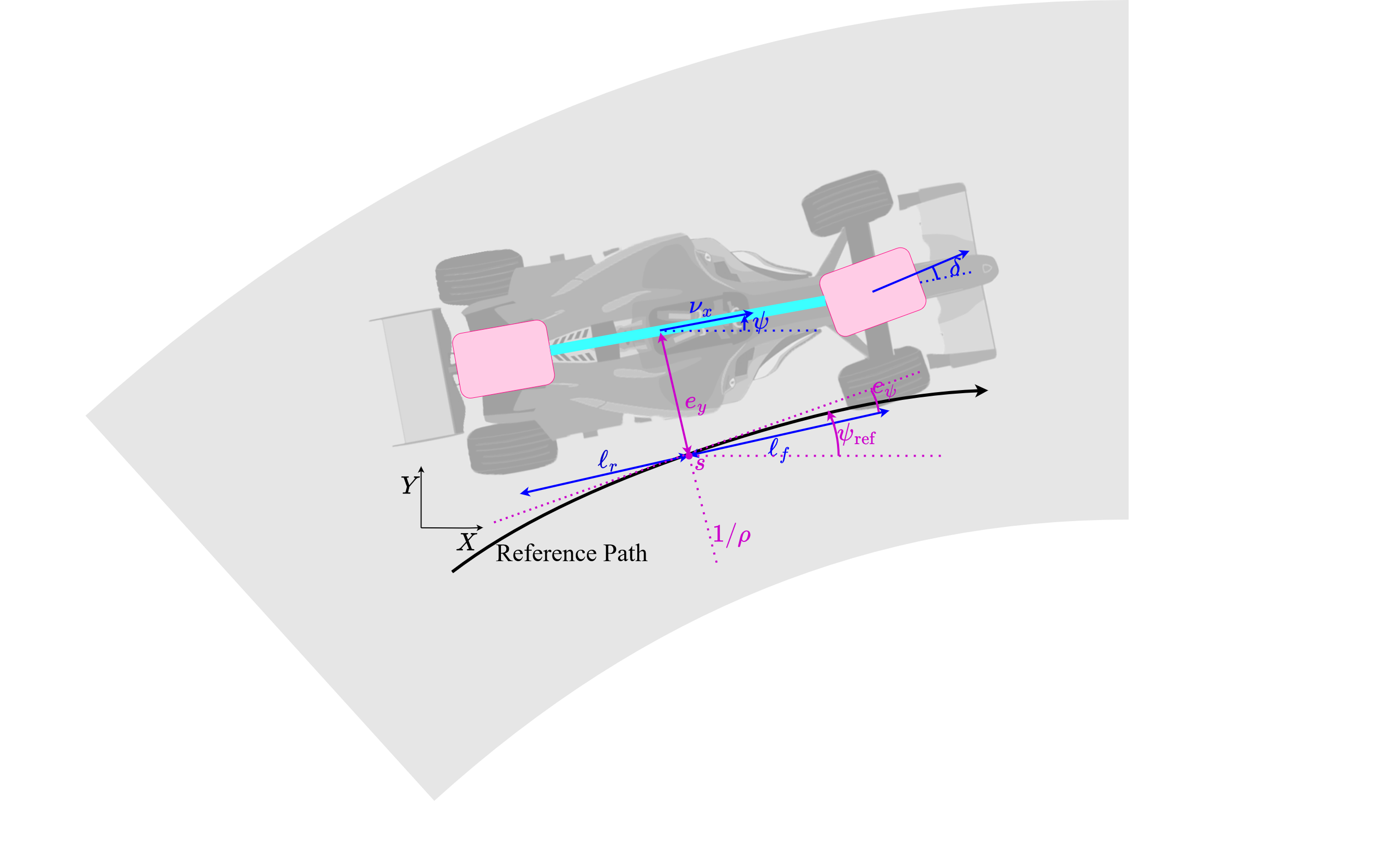}
    \caption{Kinematic Bicycle Model in Curvilinear Coordinates}
    \label{fig:bike}
\end{figure}
Given a velocity profile and a reference path, and assuming $\delta$ and $e_{\psi}$ remain small, a linear approximation for the lateral motion is given by
\begin{subequations}
    \begin{align}
        \dot{e}_{\psi} &= \frac{\nu_x}{\ell_f + \ell_r} \delta - \nu_x \rho + \dot{\delta} \frac{\ell_r}{\ell_f + \ell_r}, \\
        \dot{e}_{y} &= \frac{\ell_r}{\ell_f + \ell_r} \nu_x \delta + \nu_x e_{\psi}.
    \end{align}
\end{subequations}
We define the state $x = [\delta, e_{\psi}, e_y]^\top$ and control $u = \dot{\delta}$ and use Euler integration with a time-step of $\Delta t = 0.1$~s to obtain the affine system
\begin{align}
    &\hspace{-2mm} x_{k+1} = A(\nu_{x_k}) x_{k} + B(\nu_{x_k}) u_k + D w_k + r(\nu_{x_k}, \rho_k),
\end{align}
where, for the given reference trajectory $\bm{z_\mathrm{ref}} = \{[\nu_{x_0}, \rho_0]^\top, \dots, [\nu_{x_T}, \rho_T]^\top\}$,
\begin{subequations}
    \begin{align}
        A(\nu_{x_k}) &= A_k 
        = \begin{bmatrix}
            1 & 0 & 0 \\
            \frac{\nu_{x_k}} {\ell_f + \ell_r} \Delta t  & 1 & 0 \\
            \frac{\ell_r} {\ell_f + \ell_r} \nu_{x_k} \Delta t  & \nu_{x_k} \Delta t  & 1
        \end{bmatrix}, 
        \\
        B(\nu_{x_k}) &= B_k
        = \begin{bmatrix}
            \Delta t \\
            \frac{\ell_r} {\ell_f + \ell_r} \Delta t \\
            0
        \end{bmatrix},
        \\
        D &= \begin{bmatrix}
            \theta_{\delta} \Delta t & 0 & 0 \\
            0 & \theta_{\psi} \Delta t & 0 \\
            0 & 0 & \theta_{y} \Delta t
        \end{bmatrix},
        \\ 
        r(\nu_{x_k}, \rho_k) &= r_k
        = \begin{bmatrix}
            0 \\
            - \rho_k \nu_{x_k} \Delta t \\
            0 \\
        \end{bmatrix},
    \end{align}
\end{subequations}
where we have added the additive noise term to account for errors owing to the linear approximation.

We define 
\begin{subequations}
    \begin{align}
        \mathcal{X} &= \{x \in \Re^{3} : \begin{bmatrix}
            -\pi / 4 \\
            -\pi / 4 \\
            -2 \\
        \end{bmatrix} \leq x \leq \begin{bmatrix}
            \pi / 4 \\
            \pi / 4 \\
            2 \\
        \end{bmatrix}
        \},
    \end{align}
    and 
    \begin{align}
        \mathcal{U} &= \{u \in \Re : -1 \leq u \leq 1 \},
    \end{align}
\end{subequations}
and we let $\bm{z_\mathrm{ref}}$ satisfy, for all $z \in \bm{z_\mathrm{ref}}$,
\begin{align}
    \begin{bmatrix}
        \nu_\mathrm{min} \\
        \rho_\mathrm{min}
    \end{bmatrix}
    \leq z \leq 
    \begin{bmatrix}
        \nu_\mathrm{max} \\
        \rho_\mathrm{max}
    \end{bmatrix},
\end{align}
where $\nu_\mathrm{min} = 1.0$, $\nu_\mathrm{max} = 20.0$, $\rho_\mathrm{min} = -0.025$, $\rho_\mathrm{max} = 0.025$.
Thus, $\mathcal{S} = \mathrm{co}\{S(\nu_\mathrm{min}, \rho_\mathrm{min}),\allowbreak S(\nu_\mathrm{max}, \rho_\mathrm{min}),\allowbreak S(\nu_\mathrm{min},\allowbreak \rho_\mathrm{max}),\allowbreak S(\nu_\mathrm{max}, \rho_\mathrm{max}) \}$,
where $S(v, \rho) = [A(v),\allowbreak B(v),\allowbreak D, r(v, \rho)]$,
and $[A_k, B_k, D, r_k] \in \mathcal{S}$ for all $k = 0, \dots, T$.
We set the maximum probability of constraint violations to $p_{x, i} = 0.025$ for $i = 1, \dots, 6$ and $p_{u, j} = 0.05$ for $j = 1, 2$. 
The length of the control horizon is $N = 4$, the cost matrices are given by $Q = \mathrm{diag}(1, 1, 1)$, $R = 100$, and $x_{g_k} = \bm{0}_{3 \times 1}$ for all $k = 0, 1, \dots, T$.
The vehicle dimensions are $\ell_f=\ell_r = 2.4$, and we set the noise parameters
as $\theta_\delta = \theta_\psi = \theta_y = 0.1$.

The proposed approach is compared with two na\"{\i}ve implementations of CS-SMPC.
The first uses nominal terminal sets given by $\mathcal{X}_{f, \mathrm{nom}}^{\mu}$ and $\Sigma_{f, \mathrm{nom}}$ computed using an LTI system given by taking the mean values of $\nu_{x_k}$ and $\rho_k$ over $k = 0, \dots, T$,
and the second uses no terminal constraints at all. 
The robust and nominal terminal covariances ($\Sigma_f$ and $\Sigma_{f, \mathrm{nom}}$, respectively) are shown in Fig.~\ref{fig:terminal_covariance_vehicle}.
The three approaches are simulated over 20 Monte Carlo trials, with
the results, including the computed robust terminal set as well as the closed-loop CS-SMPC trajectories, shown in Figs.~\ref{fig:mpc_vehicle_error}-\ref{fig:mpc_vehicle_road}.

\begin{figure}[h]
    \centering
    \includegraphics[width=\columnwidth]{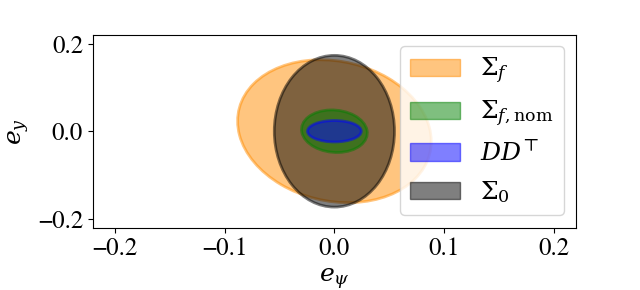}
    \caption{Computed robust terminal covariance $\Sigma_f$, compared with a nominal terminal covariance for a single set of system parameters $\Sigma_{f, \mathrm{nom}}$, the noise covariance $D D^\top$, and the initial state covariance $\Sigma_0$.}
    \label{fig:terminal_covariance_vehicle}
\end{figure}

\begin{figure}[h]
    \centering
    \includegraphics[width=\columnwidth]{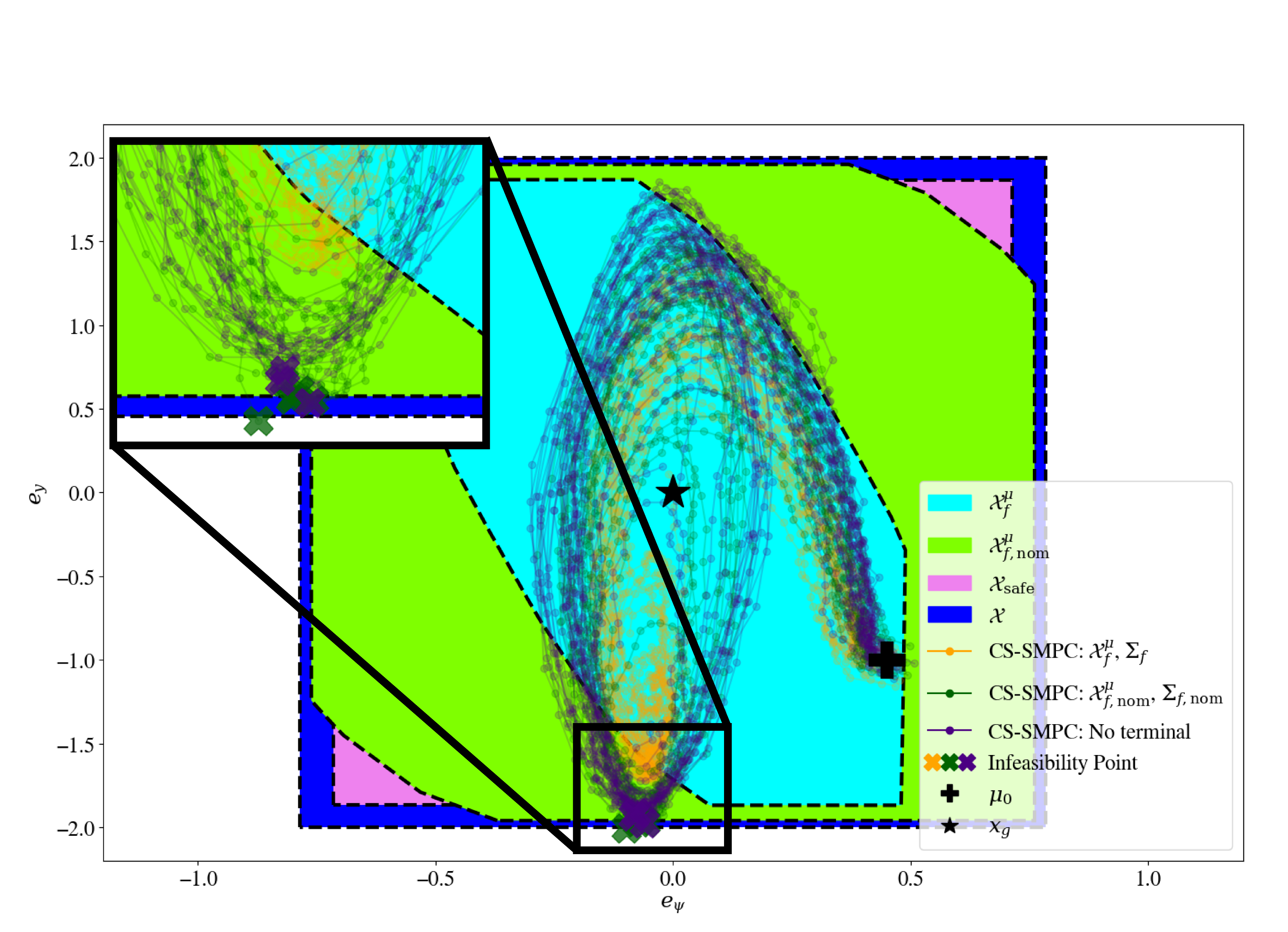}
    \caption{Computed robust terminal set $\mathcal{X}_{f}^{\mu}$, compared with a nominal terminal set for a single set of system parameters $\mathcal{X}^{\mu}_{f, \mathrm{nom}}$, and the tightened state constraints $\mathcal{X}_{\mathrm{safe}}$. 
    % Monte Carlo closed-loop CS-SMPC trajectories with robust $\mathcal{X}_{f}^{\mu}$ and $\Sigma_f$, compared with CS-SMPC using nominal and no terminal constraints, respectively.
    }
    \label{fig:mpc_vehicle_error}
\end{figure}

\begin{figure}[h]
    \centering
    \includegraphics[width=\columnwidth]{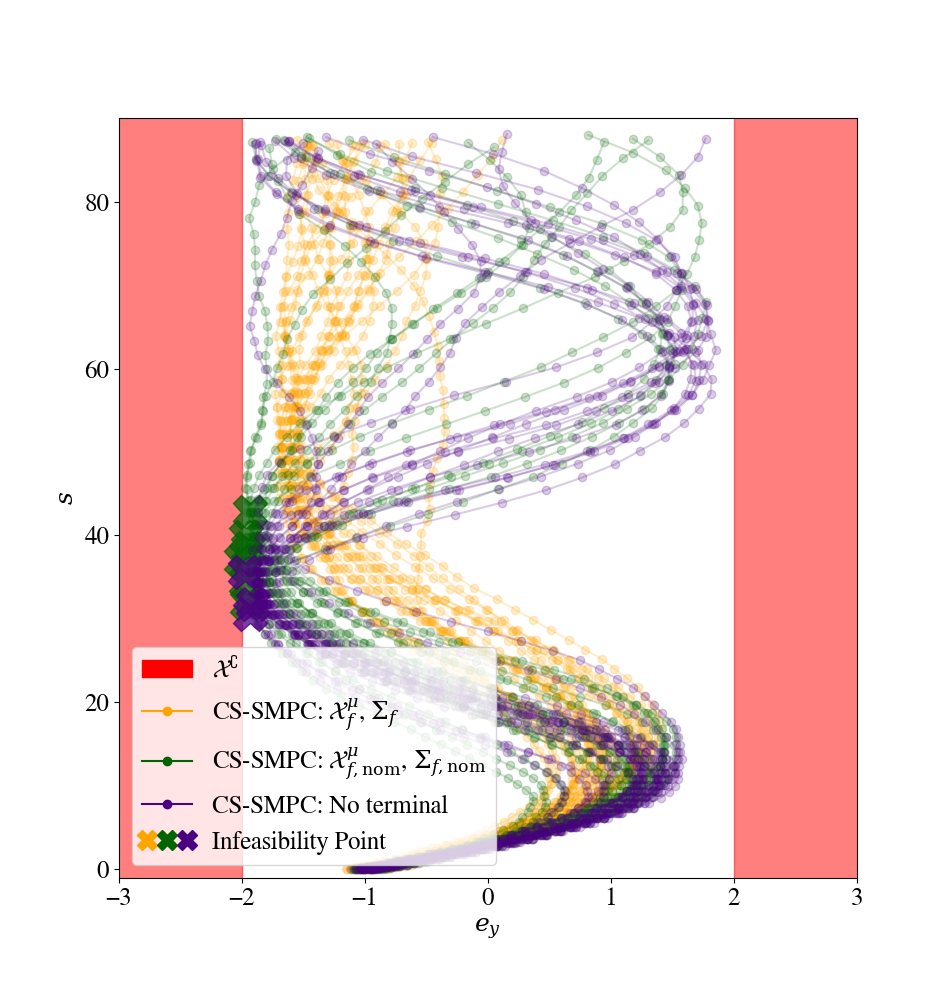}
    \caption{Monte Carlo closed-loop CS-SMPC trajectories in curvilinear coordinates with robust $\mathcal{X}_{f}^{\mu}$ and $\Sigma_f$, compared with CS-SMPC using nominal and no terminal constraints, respectively.}
    \label{fig:mpc_vehicle_road}
\end{figure}

It may be seen from Fig.~\ref{fig:mpc_vehicle_error} that the robust terminal set is a subset
of the nominal terminal set, $\mathcal{X}_{f}^{\mu} \subseteq \mathcal{X}_{f, \mathrm{nom}}^{\mu}$, as the robust terminal set is the intersection of all nominal control invariant sets for any system parameters. 
Likewise, the terminal sets $\mathcal{X}_{f}^{\mu}$ and $\mathcal{X}_{f, \mathrm{nom}}^{\mu}$ are subsets of $\mathcal{X}_{\text{safe}}$ as they are the sets for which any value of the mean state may be contained within $\mathcal{X}_{\text{safe}}$ at all future time steps. 

Figure~\ref{fig:mpc_vehicle_road} shows the closed loop trajectories in curvilinear coordinates for the vehicle using a control policy generated with CS-SMPC subject to the robust terminal constraints ($\mathcal{X}_{f}^{\mu}$, $\Sigma_{f}$), nominal terminal constraints ($\mathcal{X}_{f, \mathrm{nom}}^{\mu}$, $\Sigma_{f, \mathrm{nom}}$), or no terminal constraints, respectively. 
The colored crosses indicate points at which no feasible solution to the optimal control problem exists and the corresponding trajectory ends. 
This occurs when the vehicle reaches a state near the boundaries from which it cannot recover and no control policy exists within the search space that will prevent the vehicle from moving outside the boundaries and violating the state constraints in the near future. 
It may be seen, from the absence of orange crosses in Fig.~\ref{fig:mpc_vehicle_road}, that the closed-loop trajectories using the control policy \eqref{eq:mpc_control_policy} computed using CS-SMPC with the robust terminal constraints ($\mathcal{X}_{f}^{\mu}$, $\Sigma_{f}$) obey the state constraints and do not lead to infeasibility of Problem~\eqref{prob:smpc}. 
When CS-SMPC uses the incorrect terminal constraints ($\mathcal{X}_{f, \mathrm{nom}}^{\mu}$, $\Sigma_{f, \mathrm{nom}}$) or no terminal constraints at all, it may be seen from the green and purple crosses, that Problem~\eqref{prob:smpc} does not remain recursively feasible for all trajectories.
For the case of the nominal terminal constraints, this is due to the fact that $\mathcal{X}_{f, \mathrm{nom}}^{\mu}$ and $\Sigma_{f, \mathrm{nom}}$ are not invariant over all potential system matrices, so the future dynamics may render Problem~\eqref{prob:smpc} infeasible. 
For the case of no terminal constraints, clearly there is no invariance property and merely constraining the system to remain within $\mathcal{X}$ is insufficient to prevent the vehicle from reaching an irrecoverable state.
These results are summarized in Table~\ref{tab:infeasibilities}.

\begin{table}[h]
    \centering
    \caption{Infeasibility points encountered out of 20 trials for proposed method vs na\"{\i}ve baselines.}
    \begin{tabular}{|c||c|c|c|} \hline
        Terminal Constraints & $\mathcal{X}_f$ & $\mathcal{X}_{f, \mathrm{nom}}$ & None \\
        \hline
        Infeasibility Encountered & 0 & 6 & 3 \\
        \hline
    \end{tabular}
    \label{tab:infeasibilities}
\end{table}

\section{Conclusion}

In this work, we have established the first recursive feasibility result in the literature for SMPC for linear time-varying systems subject to unbounded disturbances. 
We first presented a theorem giving sufficient conditions for the recursive feasibility of the LTV SMPC optimization problem. 
Then, we introduced a result showing how the terminal constraints may be calculated to satisfy these sufficient conditions. 
Finally, we showed that these results extend to the Covariance Steering SMPC (CS-SMPC) problem used in our previous work, which solves for the feedback gains online using convex programming, and for which we have already established convergence guarantees subject to the condition of recursive feasibility. 
These results are verified via a numerical example using a nonlinear vehicle model.
% We use our proposed method of calculating the terminal constraints along with CS-SMPC to compute a control policy to drive the vehicle to track a given reference trajectory.

% \bibliographystyle{IEEEtran}
\bibliographystyle{plain}
\bibliography{recursive_feasibility.bib}

\end{document}